\numberwithin{equation}{section}
\numberwithin{equation}{section}
\def\proof{\indent{\em Proof.\quad}}
\def\endproof{\hfill\hbox{$\sqcup$}\llap{\hbox{$\sqcap$}}\medskip}
\newtheorem{thm}{\indent Theorem}[section]
\newtheorem{cor}[thm]{\indent Corollary}
\newtheorem{lem}[thm]{\indent Lemma}
\newcommand{\td}{\tilde}
\newcommand{\fr}{\frac}
\newcommand{\edd}{\end{document}}
\newcommand{\be}{\begin{equation}}
\newcommand{\ee}{\end{equation}}
\newcommand{\lagl}{\langle}
\newcommand{\ragl}{\rangle}
\newcommand{\lmx}{\left(\begin{matrix}}
\newcommand{\rmx}{\end{matrix}\right)}
\newcommand{\ldt}{\left|\begin{matrix}}
\newcommand{\rdt}{\end{matrix}\right|}
\newcommand{\dv}{{\rm div}}
\newcommand{\sgn}{{\rm Sgn}}
\newcommand{\eps}{\epsilon}
\newcommand{\veps}{\varepsilon}
\newcommand{\bbr}{{\mathbb R}}
\newcommand{\const}{{\rm const}}
\newcommand{\ba}{\begin{array}}
\newcommand{\ea}{\end{array}}
\newcommand{\nnm}{\nonumber}
\newcommand{\beal}{\begin{align}}
\newcommand{\eal}{\end{align}}
\newcommand{\bea}{\begin{eqnarray}}
\newcommand{\eea}{\end{eqnarray}}
\newcommand{\supp}{{\rm Supp\,}}
\begin{document}

\title[Rigidity theorems of the space-like $\lambda$-hypersurfaces in the Lorentzian space]{Rigidity theorems of the space-like $\lambda$-hypersurfaces\\in the Lorentzian space $\bbr^{n+1}_1$} 

\author[X. X. Li]{Xingxiao Li$^*$} 

\author[X. F. Chang]{Xiufen Chang} 

\dedicatory{}

\subjclass[2000]{53C40, 53C42.}
%
\keywords{Lorentzian space, rigidity theorems, space-like $\lambda$-hypersurfaces, self-shrinkers.}
\thanks{Research supported by
Foundation of Natural Sciences of China (No. 11171091, 11371018).}
\address{
School of Mathematics and Information Sciences
\endgraf Henan Normal University \endgraf Xinxiang 453007, Henan
\endgraf P.R. China}
\email{xxl$@$henannu.edu.cn}

\address{
School of Mathematics and Information Sciences
\endgraf Henan Normal University \endgraf Xinxiang 453007, Henan
\endgraf P.R. China} %
\email{changxfff$@$163.com}



\begin{abstract}
In this paper, we study complete space-like $\lambda$-hypersurfaces in the Lorentzian space $\bbr^{n+1}_1$. As the result, we prove some rigidity theorems for these hypersurfaces including the complete space-like self-shrinkers in $\bbr^{n+1}_1$.
\end{abstract}

\maketitle

\section{Introduction} 

For $\veps=\pm 1$, let $\mathbb{E}^{n+1}_\veps$ be the Euclidean space $\mathbb{R}^{n+1}$ (when $\veps=1$) or the Lorentzian space $\mathbb{R}^{n+1}_{1}$ (when $\veps=-1$). The standard inner product on $\mathbb{E}^{n+1}_\veps$ is given by:
\begin{align*}
\langle X, Y\rangle=X_{1}Y_{1}+X_{2}Y_{2}+\cdots+\veps X_{n+1}Y_{n+1}.
\end{align*}

Let $M$ be an immersed hypersurface in $\mathbb{E}^{n+1}_\veps$ and $x: M^n\rightarrow\mathbb{E}^{n+1}_\veps$ be the corresponding immersion of $M$. In this paper, we also use $x$ to denote the position vector of $M$. Thus $x$, as well as the unit normal vector $N$ and the mean curvature $H$, are taken as smooth $\bbr^{n+1}$-valued functions on $M^n$. For a suitably chosen function $s$ on $M^n$, if the position vector $x$ of $M$ satisfies
\begin{equation}\label{eq1.1}
\begin{aligned}
H-s\langle x, N\rangle:=\lambda=\const,
\end{aligned}
\end{equation}
then $M$ is called a $\lambda$-hypersurface with the weight function $s$.

When $s\equiv 0$, the corresponding $\lambda$-hypersurfaces reduce to hypersurfaces with constant mean curvature which have been studied extensively. For example, Calabi considered in \cite{c} the maximum space-like hypersurfaces $M^n$ in the Lorentzian space $\bbr^{n+1}_1$ and proposed some Bernstein-type problems for a nonlinear equation; For a given complete space-like hypersurface $M$ in $\bbr^{n+1}_1$, it was proved by Xin (\cite{X}) that if the image of the Gauss map is inside a bounded subdomain of the hyperbolic $n$-space $\mathbb{H}^n$, then $M$ must be a hyperplane. A similar result was also proved earlier in (\cite{B.P}) with extra assumptions. In \cite{C-S-Z}, Cao, Shen and Zhu further extended the result by showing that if the image of the Gauss map lies inside a horoball of $\mathbb{H}^n$, $M$ is necessarily a hyperplane. Later, Wu (\cite{W}) generalized the above mentioned results and proved a more general Bernstein theorem for complete space-like hypersurfaces in Lorentzian space with constant mean curvature.

If $s$ is chosen to be constant and $\lambda=0$, $M$ is called a self-shrinker. It is known that self-shrinkers play an important role in the study of the mean curvature flow because they describe all possible blow ups at a given singularity of the mean curvature flow (\cite{C-W2}). There are also other mathematicians who have been studying the geometries of self-shrinkers and obtained a lot of interesting theorems, including some gap theorems and rigidity theorems for complete self-shrinkers. Details of this can been found in, for example, \cite{H.D.C-H.L}, \cite{C-W1}, \cite{Q.M.C-Y.J}, \cite{Q.D-Y.L.X}, \cite{Q.D-Y.L.X-L.Y}, \cite{Q-G-1}, \cite{H.L-Y.W} etc.

According to \cite{Q-G}, the concept of $\lambda$-hypersurfaces in the Euclidean space $\bbr^{n+1}$ were firstly introduced by M. Mcgonagle and J. Ross with $s=\frac{1}{2}$ (\cite{M-J});  Q. Guang (\cite{Q-G}) also defined the $\lambda$-hypersurfaces in $\bbr^{n+1}$ with $s=\frac{1}{2}$ and proved a Bernstein-type theorem showing that smooth $\lambda$-hypersuafaces which are entire graphs and with a polynomial volume growth are necessarily hyperplanes in $\bbr^{n+1}$.

If one takes $s(X)=-1$ in \eqref{eq1.1}, the corresponding $\lambda$-hypersurfaces are exactly what Q.-M. Cheng and G. Wei defined and studied in \cite{C-W2}, where the authors have successfully introduced a weighted volume functional and proved that the $\lambda$-hypersuafaces in the Euclidean space $\bbr^{n+1}$ are nothing but the critical points of the above functional. Later, Cheng, Ogaza and Wei (\cite{C-O-W}, \cite{C-W3}) have obtained some rigidity and Bernstein-type theorems for these complete $\lambda$-hypersurface. In particular, the following result is proved:

\begin{thm}[\cite{C-O-W}]\label{thm 1.1} Let $x: M^n\rightarrow \mathbb{R}^{n+1}$ be an $n$-dimensional complete $\lambda$-hypersurface with weight $s=-1$ and a polynomial area growth. Then, either $x$ is isometric to one of the following embedded hypersurfaces:
\begin{enumerate}
\item the sphere $S^{n}(r)\subset \mathbb{R}^{n+1}$ with radius $r>0$;
\item the hyperplane $\mathbb{R}^{n}\subset \mathbb{R}^{n+1}$;
\item the cylinder $S^{1}(r)\times\mathbb{R}^{n-1}\subset \mathbb{R}^{n+1}$;
\item the cylinder $S^{n-1}(r)\times\mathbb{R}\subset \mathbb{R}^{n+1}$,
\end{enumerate}
or, there exists some $p\in M^n$ such that the squared norm $S$ of the second fundamental form of $x$ satisfies
\begin{align*}
\left(\sqrt{S(p)-\frac{H^{2}(p)}{n}}+|\lambda|\frac{n-2}{2\sqrt{n(n-1)}}\right)^{2} +\frac{1}{n}(H(p)-\lambda)^{2}>1+\frac{n\lambda^{2}}{4(n-1)}.
\end{align*}
\end{thm}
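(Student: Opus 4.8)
The plan is to prove the contrapositive: assuming that the reverse (non-strict) inequality
\[
\Phi:=\Big(\sqrt{S-\tfrac{H^{2}}{n}}+|\lambda|\tfrac{n-2}{2\sqrt{n(n-1)}}\Big)^{2}+\tfrac1n(H-\lambda)^{2}\ \le\ 1+\tfrac{n\lambda^{2}}{4(n-1)}
\]
holds at every point of $M$, I would then force $M$ to be one of the four listed models. The natural tool is the drift Laplacian $\cll=\Delta-\langle x,\nabla\,\cdot\,\rangle$, which is self-adjoint with respect to the Gaussian weight $e^{-|x|^{2}/2}$. Differentiating the defining relation $H+\langle x,N\rangle=\lambda$ (the case $s=-1$) gives $\nabla_{i}H=\sum_{j}h_{ij}\langle x,e_{j}\rangle$, and a standard computation using the Gauss and Codazzi equations produces the structural identities
\[
\cll H=H-SH+\lambda S,\qquad \tfrac12\cll S=|\nabla A|^{2}+S-S^{2}+\lambda f_{3},
\]
where $S=\sum h_{ij}^{2}$, $f_{3}=\sum_{i,j,k}h_{ij}h_{jk}h_{ki}$, and $A=(h_{ij})$ is the second fundamental form. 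The first thing I would do is rederive these carefully, keeping track of the lower-order $\lambda$-terms.

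Next I would pass to the trace-free tensor $\phi_{ij}=h_{ij}-\frac{H}{n}\delta_{ij}$, so that $|\phi|^{2}=S-\frac{H^{2}}{n}$, and combine the two identities to compute $\frac12\cll|\phi|^{2}$; in the self-shrinker case $\lambda=0$ this already gives the exact identity $\frac12\cll|\phi|^{2}=|\nabla\phi|^{2}+|\phi|^{2}(1-S)$. The cubic term decomposes as $f_{3}=\sum\phi_{ij}\phi_{jk}\phi_{ki}+\frac{3H}{n}|\phi|^{2}+\frac{H^{3}}{n^{2}}$, and the key algebraic input is the Okumura--Li--Li inequality
\[
\Big|\sum_{i,j,k}\phi_{ij}\phi_{jk}\phi_{ki}\Big|\ \le\ \frac{n-2}{\sqrt{n(n-1)}}\,|\phi|^{3},
\]
whose sharp constant is exactly what produces the coefficient $\frac{n-2}{2\sqrt{n(n-1)}}$ in the statement. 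Substituting and completing the square in $|\phi|$ (and in $H-\lambda$, using $H-\lambda=-\langle x,N\rangle$) should bring the estimate into precisely the form
\[
\tfrac12\cll|\phi|^{2}\ \ge\ |\nabla\phi|^{2}+|\phi|^{2}\Big(1+\tfrac{n\lambda^{2}}{4(n-1)}-\Phi\Big),
\]
and verifying that the residual constant is exactly $1+\frac{n\lambda^{2}}{4(n-1)}$ is a bookkeeping check I would carry out explicitly.

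Under the standing assumption $\Phi\le 1+\frac{n\lambda^{2}}{4(n-1)}$ the right-hand side is nonnegative, so $\cll|\phi|^{2}\ge 0$. Because $M$ has polynomial area growth, the weighted volume is finite and $\cll$ is self-adjoint on the relevant $L^{2}$-space; integrating $\cll|\phi|^{2}\ge 0$ against $e^{-|x|^{2}/2}$ (via the Cheng--Zhou generalized maximum principle / weighted Stokes theorem) then forces $\cll|\phi|^{2}\equiv 0$ and hence equality throughout. In particular $\nabla\phi\equiv 0$ and either (i) $|\phi|\equiv 0$, i.e.\ $M$ is totally umbilic, which together with the defining relation yields the sphere or the hyperplane; or (ii) equality holds pointwise in Okumura's inequality, meaning $\phi$ has at each point a single eigenvalue of one sign and $n-1$ equal eigenvalues of the other. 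Combined with $\nabla\phi\equiv 0$ this gives exactly two distinct constant principal curvatures with multiplicities $1$ and $n-1$, and the relation $H+\langle x,N\rangle=\lambda$ pins the radii, producing the two cylinders $S^{1}(r)\times\bbr^{n-1}$ and $S^{n-1}(r)\times\bbr$.

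The step I expect to be most delicate is the rigidity analysis in case (ii): extracting from pointwise Okumura equality together with $\nabla\phi\equiv 0$ the global splitting into a round factor and a flat factor requires a careful eigenvalue-multiplicity argument and an appeal to a de Rham / Lawson-type splitting, after which one must check that the $\lambda$-hypersurface condition is compatible only with the stated radii. A secondary technical point is the integration step: the polynomial-area-growth hypothesis is essential there, since it is what guarantees finiteness of the weighted integrals and legitimizes the self-adjointness of $\cll$; without it, $\cll|\phi|^{2}\ge 0$ alone would not force $|\phi|^{2}$ to be constant. Reproducing the exact constant $1+\frac{n\lambda^{2}}{4(n-1)}$ in the completion of the square is routine but must be done with care, since the sharpness of the whole theorem rests on it.
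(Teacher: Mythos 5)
Your proposal is correct and follows essentially the same route as the paper: the paper does not actually reprove Theorem \ref{thm 1.1} (it is quoted from \cite{C-O-W}), but your steps --- computing the drift operator $\mathcal{L}$ applied to $B=S-\frac{H^{2}}{n}$, invoking Okumura's lemma for the cubic trace-free term, completing the square to produce the constant $1+\frac{n\lambda^{2}}{4(n-1)}$, integrating by parts against the Gaussian weight, and concluding via parallel second fundamental form and Lawson's classification --- are exactly the steps the paper carries out for the Lorentzian analogues (Theorems \ref{thm 1.2} and \ref{thm 1.3}). No substantive changes are needed.
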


In this paper, we consider space-like $\lambda$-hypersurfaces $x:M^n\to\bbr^{n+1}_1$ in the Lorentzian space $\bbr^{n+1}_1$. We first extend the definitions of $\lambda$-hypersurfaces and self-shrinkers to those in $\bbr^{n+1}_1$, and then generalize the $\mathcal{L}$-operator that has been effectively used by many authors (see the operators $\mathcal{\td L}$ and $\mathcal{L}$ defined, respectively in \eqref{2.4} and \eqref{2.14}). We shall be using these generalized operators to extend Theorem \ref{thm 1.1} to the
complete space-like $\lambda$-hypersurfaces in  $\bbr^{n+1}_1$.

Let $a$ be a nonzero constant and denote $\eps=\sgn(a\langle x, x\rangle)$ where $\lagl\cdot,\cdot\ragl$ is the Lorentzian product. We shall study $\lambda$-hypersurfaces in $\bbr^{n+1}_1$ either with weight $s=\eps a$ or with weight $s=\langle x, x\rangle$.
For a given hypersurface $M$, we always use $S$ to denote the squared norm of the second fundamental form, and use $A$ and $I$ to denote the shape operator and the identity map, respectively. Then the rigidity theorems we have proved in this paper are stated as follows:

\begin{thm}\label{thm 1.2} Let $x: M^n\rightarrow\mathbb{R}_{1}^{n+1}$ be a complete space-like $\lambda$-hypersurface with $s=\eps a$. Suppose that $\langle x, x\rangle$ does not change sign and
\begin{equation}\label{eq1.2}
\int_{M^n}\left(\left|\nabla\left(S-\frac{H^{2}}{n}\right)\right| +\left|\mathcal{L}\left(S-\frac{H^{2}}{n}\right)\right| \right)e^{-\frac{\eps a\langle x, x\rangle}{2}}dV_{M^n}<+\infty,
\end{equation}
where the differential operator $\mathcal{L}$ is defined by \eqref{2.4}. Then, either $x$ is totally umbilical and thus isometric to one of the following two hypersurfaces:
\begin{enumerate}
\item the hyperbolic space $\mathbb{H}^{n}(c)\subset\mathbb{R}_{1}^{n+1}$ with an arbitrary sectional curvature $c<0$;
\item the Euclidean space $\mathbb{R}^{n}\subset \mathbb{R}_{1}^{n+1}$,
\end{enumerate}
or, there exists some $p\in M^n$ such that, at $p$
\begin{equation}\label{1.3}
\left(\sqrt{S -\frac{H^{2} }{n}}-|\lambda|\frac{n-2}{2\sqrt{n(n-1)}}\right)^{2}+\frac{1}{n}(H -\lambda)^{2}-\frac{n\lambda^{2}}{4(n-1)}+\eps a<0.
\end{equation}
\end{thm}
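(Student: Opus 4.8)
The plan is to prove the contrapositive: assuming that the left-hand side of \eqref{1.3} is nonnegative at every point of $M^n$, I will show that $x$ must be totally umbilic, and then identify the umbilic examples. Throughout, write $\Phi=A-\frac{H}{n}I$ for the trace-free part of the shape operator, so that $|\Phi|^2=S-\frac{H^2}{n}\ge 0$ vanishes identically precisely when $x$ is totally umbilic. The whole argument is driven by the weighted operator $\mathcal L=\Delta-\eps a\,\lagl x,\nabla\,\cdot\,\ragl$ attached to the density $e^{-\phi}$ with $\phi=\frac{\eps a\lagl x,x\ragl}{2}$; note that the hypothesis that $\lagl x,x\ragl$ does not change sign is exactly what makes $\eps=\sgn(a\lagl x,x\ragl)$ a constant, so that this density and $\mathcal L$ are globally well defined and $\mathcal L u\,e^{-\phi}=\dv(e^{-\phi}\nabla u)$.

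First I would record the first-order data: differentiating the constraint $H-\eps a\lagl x,N\ragl=\lambda$ and using the Gauss and Weingarten formulae for a space-like hypersurface (whose normal is time-like, which flips the familiar Riemannian signs), I would compute $\mathcal L\lagl x,N\ragl$, $\mathcal L H$ and $\mathcal L h_{ij}$, and assemble from these a Simons-type identity for $\mathcal L S$ and for $\mathcal L(H^2/n)$. Subtracting, and absorbing the gradient terms via $|\nabla A|^2-\frac1n|\nabla H|^2=|\nabla\Phi|^2\ge 0$, the outcome should be a pointwise inequality of the shape
\[
\tfrac12\,\mathcal L\Big(S-\tfrac{H^2}{n}\Big)\ \ge\ |\nabla\Phi|^2+\Big(S-\tfrac{H^2}{n}\Big)\,\mathcal P,
\]
where the cubic term $\tr\Phi^3$ has been estimated by Okumura's inequality $|\tr\Phi^3|\le \frac{n-2}{\sqrt{n(n-1)}}|\Phi|^3$ — this is precisely what produces the factor $\frac{n-2}{2\sqrt{n(n-1)}}$ — and where the zeroth-order coefficient $\mathcal P$ is bounded below by the left-hand side of \eqref{1.3}. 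Getting every sign correct in this Lorentzian computation, and organizing the lower-order terms so that $\mathcal P$ reassembles exactly into the completed square of \eqref{1.3}, is the most laborious step.

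With the inequality in hand, the analytic core is short. Under the standing assumption the left-hand side of \eqref{1.3} is $\ge 0$, so $\mathcal P\ge 0$ and hence $\mathcal L(S-\frac{H^2}{n})\ge 0$ pointwise. On the other hand, the integrability hypothesis \eqref{eq1.2} is tailored so that both $|\nabla(S-\frac{H^2}{n})|$ and $|\mathcal L(S-\frac{H^2}{n})|$ are integrable against $e^{-\phi}dV_{M^n}$; together with completeness this licenses a weighted Stokes/cut-off argument (equivalently, self-adjointness of $\mathcal L$ with respect to $e^{-\phi}dV_{M^n}$) giving $\int_{M^n}\mathcal L(S-\frac{H^2}{n})\,e^{-\phi}dV_{M^n}=0$. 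A nonnegative function with vanishing integral vanishes, so $\mathcal L(S-\frac{H^2}{n})\equiv 0$, and feeding this back into the displayed inequality forces both $\nabla\Phi\equiv 0$ and $(S-\frac{H^2}{n})\,\mathcal P\equiv 0$.

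It remains to upgrade this to $\Phi\equiv 0$. If $\Phi\not\equiv 0$, then $\Phi$ is parallel and $|\Phi|^2$ is a positive constant, equality must hold in Okumura's inequality (so $A$ has a single principal curvature of multiplicity $n-1$), and $\mathcal P\equiv 0$ squeezes the left-hand side of \eqref{1.3} to $0$ as well; evaluating the constraint $H=\lambda+\eps a\lagl x,N\ragl$ on this rigid product model and using completeness yields a contradiction, so in fact $x$ is totally umbilic. Finally I would classify the complete totally umbilic space-like solutions of \eqref{eq1.1} with $s=\eps a$: $A\equiv 0$ gives the hyperplane $\bbr^{n}\subset\bbr^{n+1}_1$, while $A$ a nonzero multiple of $I$ gives the hyperbolic space $\bbh^{n}(c)$, which are exactly alternatives (1) and (2). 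I expect the Lorentzian Simons computation and the exclusion of the parallel non-umbilic case to be the two points requiring genuine care.
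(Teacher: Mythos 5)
Your proposal follows essentially the same route as the paper: a Simons-type computation of $\mathcal L(S-\frac{H^2}{n})$, Okumura's inequality to produce the completed square of \eqref{1.3}, weighted integration by parts justified by \eqref{eq1.2} to force $\nabla h\equiv 0$ and the vanishing of the zeroth-order term, and finally exclusion of the non-umbilic parallel case by identifying the isoparametric product models $\mathbb{H}^{k}(c)\times\bbr^{n-k}$ and observing they are incompatible with the hypothesis that $\lagl x,x\ragl$ does not change sign (which, as you note, is what makes $\eps$ constant). The argument is correct and matches the paper's proof in structure and in all essential details.
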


\begin{thm}\label{thm 1.3} Let $x: M^n\rightarrow \mathbb{R}^{n+1}_{1}$ be a complete space-like $\lambda$-hypersurface with $s=\langle x, x\rangle$. Suppose that
\begin{align}
\int_{M^n}&\left(\left|\nabla \left(S-\frac{H^{2}}{n}\right)\right|+\left|\mathcal{\td L} \left(S-\frac{H^{2}}{n}\right)\right|\right)e^{-\frac{\langle x, x\rangle ^{2}}{4}}dV_{M^n}<+\infty,\label{eq1.4-1}
\\
&4A^{2}-\frac{4HA}{n}+\left(S-\frac{H^{2}}{n}\right)I\geq 0.\label{eq1.4-2}
\end{align}
where the differential operator $\mathcal{\td L}$ is defined by \eqref{2.14}. Then, either $x$ is totally umbilical and thus isometric to one of the following two embedded hypersurfaces:
\begin{enumerate}
\item the hyperbolic space $\mathbb{H}^{n}(c)\subset\mathbb{R}^{n+1}_{1}$ with an arbitrary $c<0$;
\item the Euclidean space $\mathbb{R}^{n}\subset\mathbb{R}^{n+1}_{1}$;
\end{enumerate}
or, there exists some $p\in M^n$ such that
\be\label{1.6}
\left(\sqrt{S(p)-\frac{H^{2}(p)}{n}} -|\lambda|\frac{n-2}{2\sqrt{n(n-1)}}\right)^{2}+\frac{1}{n}(H(p)-\lambda)^{2} -\frac{n\lambda^{2}}{4(n-1)}<0.
\ee
\end{thm}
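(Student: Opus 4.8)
The plan is to argue by contraposition: I assume that the strict inequality \eqref{1.6} holds nowhere, so that
\[
\Psi:=\left(\sqrt{S-\tfrac{H^{2}}{n}}-|\lambda|\tfrac{n-2}{2\sqrt{n(n-1)}}\right)^{2}+\tfrac{1}{n}(H-\lambda)^{2}-\tfrac{n\lambda^{2}}{4(n-1)}\ \ge\ 0\qquad\text{on }M^{n},
\]
and I show that $x$ must then be totally umbilical, after which the two admissible models are read off. Throughout write $B=A-\tfrac{H}{n}I$ for the trace-free shape operator and $f=S-\tfrac{H^{2}}{n}=|B|^{2}\ge 0$, so that umbilicity is precisely the statement $f\equiv 0$. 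I would first invoke the weighted self-adjointness of $\mathcal{\td L}$ recorded in Section~2: with $\phi=\tfrac14\langle x,x\rangle^{2}$, the operator of \eqref{2.14}, which the weight in \eqref{eq1.4-1} identifies as $\mathcal{\td L}=\Delta-\langle x,x\rangle\langle x,\nabla(\cdot)\rangle$, is self-adjoint with respect to $e^{-\phi}dV_{M^{n}}$. Pairing it with the constant function then gives $\int_{M^{n}}\mathcal{\td L}f\,e^{-\phi}dV_{M^{n}}=0$, once the growth bound \eqref{eq1.4-1} is used to discard the terms at infinity via a cut-off argument.

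The analytic core is a Simons-type identity for $f$. To obtain it I would start from the Simons formula for $\Delta A$ on a space-like hypersurface—in which the Gauss equation of $\bbr^{n+1}_{1}$ contributes with the sign opposite to the Riemannian case—and convert $\Delta$ into $\mathcal{\td L}$ using the structure equations together with the defining relation $H-\langle x,x\rangle\langle x,N\rangle=\lambda$. I expect this to yield a pointwise identity of the form
\[
\tfrac12\mathcal{\td L}f=|\nabla B|^{2}+\tr\!\left[\Big(4A^{2}-\tfrac{4HA}{n}+fI\Big)C\right]+\lambda\,\tr(B^{3})+f\cdot Q(H,\lambda),
\]
where $C\ge 0$ is a symmetric operator produced by the drift term and $Q$ is an explicit quadratic in $H$ and $\lambda$. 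Here condition \eqref{eq1.4-2} enters decisively: it is exactly what forces $\tr[(4A^{2}-\tfrac{4HA}{n}+fI)C]\ge 0$, so that term may be discarded to leave a lower bound for $\tfrac12\mathcal{\td L}f$. The remaining cubic term is controlled by the Okumura-type estimate $|\tr(B^{3})|\le\tfrac{n-2}{\sqrt{n(n-1)}}\,f^{3/2}$; inserting this lower bound for $\lambda\,\tr(B^{3})$ and completing the square against $Q$ is designed to reassemble precisely $f\,\Psi$, leaving the differential inequality
\[
\tfrac12\mathcal{\td L}f\ \ge\ |\nabla B|^{2}+f\,\Psi\ \ge\ 0 .
\]

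Integrating this against $e^{-\phi}dV_{M^{n}}$ and using $\int_{M^{n}}\mathcal{\td L}f\,e^{-\phi}dV_{M^{n}}=0$ gives $0\ge\int_{M^{n}}(|\nabla B|^{2}+f\Psi)e^{-\phi}dV_{M^{n}}\ge 0$, whence $\nabla B\equiv 0$ and $f\Psi\equiv 0$. Since $\nabla B\equiv 0$ makes $f=|B|^{2}$ constant, either $f\equiv 0$, giving umbilicity, or $f\equiv c>0$ with $\Psi\equiv 0$. In the umbilical case I would appeal to the classification of totally umbilical space-like hypersurfaces of $\bbr^{n+1}_{1}$—namely the hyperplanes $\bbr^{n}$ and the hyperbolic spaces $\bbh^{n}(c)$, $c<0$—and verify that both satisfy the $\lambda$-hypersurface equation with $s=\langle x,x\rangle$; these are the two listed models.

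The hard parts will be two. The first is justifying rigorously the vanishing of the boundary terms at infinity from the mere weighted $L^{1}$-bound \eqref{eq1.4-1}, via a Colding--Minicozzi-style cut-off estimate on the weighted integrals of $|\nabla f|$ and $|\mathcal{\td L}f|$. The second, and the genuine obstacle, is excluding the borderline case $f\equiv c>0$: equality throughout the chain above forces equality in the Okumura estimate, so $B$ has exactly two distinct eigenvalues with multiplicities $n-1$ and $1$, and the resulting isoparametric space-like hypersurface, when fed back into $H-\langle x,x\rangle\langle x,N\rangle=\lambda$ together with $\Psi\equiv 0$ and completeness, must be shown to be inconsistent—leaving $f\equiv 0$ as the only possibility.
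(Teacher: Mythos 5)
Your plan coincides with the paper's proof in every structural element: the same Simons-type computation of $\tfrac12\mathcal{\td L}\bigl(S-\tfrac{H^{2}}{n}\bigr)$ obtained from the defining relation $H-\lagl x,x\ragl\lagl x,N\ragl=\lambda$, the same use of \eqref{eq1.4-2} to discard the drift contribution (which in the paper appears concretely as the quadratic form $x^{\top}\bigl(4A^{2}+BI-\tfrac{4HA}{n}\bigr)(x^{\top})^{t}$ evaluated on the tangential part of the position vector, together with the manifestly nonnegative term $B\lagl x,N\ragl^{2}$), Okumura's estimate (Lemma \ref{lem2.5}) on the cubic trace, and the weighted integration by parts of Corollary \ref{cor2.4}, whose hypothesis is exactly \eqref{eq1.4-1} applied to the pair $(1,\,S-\tfrac{H^{2}}{n})$. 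Up to the conclusion that the second fundamental form is parallel and that your $f$ (the paper's $B=S-\tfrac{H^{2}}{n}$) is a constant on which the bracketed quantity of \eqref{1.6} vanishes wherever $f\neq0$, your argument is the paper's argument. Two small points of hygiene: the sign of the cubic term in your displayed identity should be $-\lambda\,\tr(B^{3})$ rather than $+\lambda\,\tr(B^{3})$, though this is immaterial once Okumura's two-sided bound is inserted; and the passage from $\nabla B\equiv0$ to full parallelism of $A$ needs the Codazzi symmetry, or, as in the paper, the refinement $\sum_{i,j,k}h_{ijk}^{2}-\tfrac1n|\nabla H|^{2}\ge 2\sum_{i\neq k}h_{iik}^{2}+\sum_{i\neq j\neq k\neq i}h_{ijk}^{2}$ with equality if and only if all $h_{ijk}$ vanish.

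The step you leave open --- ruling out the borderline case $f\equiv c>0$ --- is indeed the genuine content of the endgame, and it cannot be left as a declaration that the configuration ``must be shown to be inconsistent.'' The paper closes it as follows: a parallel second fundamental form together with equality in Okumura's lemma forces exactly two distinct constant principal curvatures, one of them simple, so $x$ is a complete isoparametric space-like hypersurface; by Lawson's local rigidity theorem \cite{HBL} it is then isometric to $\mathbb{H}^{n-1}(c)\times\bbr^{1}$ or $\mathbb{H}^{1}(c)\times\bbr^{n-1}$ in $\bbr^{n+1}_{1}$. On either product $H$ and $\lagl x,N\ragl$ are constant while $\lagl x,x\ragl$ is not, so $H-\lagl x,x\ragl\lagl x,N\ragl$ cannot equal the constant $\lambda$; this contradiction eliminates the case $f\equiv c>0$ and leaves only total umbilicity or the existence of a point where \eqref{1.6} holds. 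Without this classification-plus-verification step your contrapositive does not close, so you should supply it explicitly.
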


Theorems \ref{thm 1.2} and \ref{thm 1.3} will be proved in Section $3$. Before doing this some necessary lemmas are given in Section $2$.

As a direct corollariy of Theorem \ref{thm 1.3}, we obtain

\begin{thm}\label{thm 1.4} Let $x: M^n\rightarrow \mathbb{R}^{n+1}_{1}$ be a complete space-like $\lambda$-hypersurface with $s=\langle x, x\rangle$. Suppose that
\eqref{eq1.4-1} and \eqref{eq1.4-2} are satisfied. If
\be\label{eq1.7}
\left(\sqrt{S-\fr1n H^{2}} -|\lambda|\frac{n-2}{2\sqrt{n(n-1)}}\right)^{2}+\frac{1}{n}(H-\lambda)^{2} -\frac{n\lambda^{2}}{4(n-1)}\geq 0,
\ee
then one of the following two conclusions must hold:
\begin{enumerate}
\item $\lambda\leq \left(\fr n2\right)^{\fr34}$, and $x$ is isometric to the hyperbolic space $\mathbb{H}^{n}(-r^{-2})\subset\mathbb{R}^{n+1}_{1}$ with $r\geq\left(\fr n2\right)^{\fr14}$;
\item $\lambda=0$ and $x$ is isometric to the Euclidean space $\mathbb{R}^{n}\subset\mathbb{R}^{n+1}_{1}$.
\end{enumerate}
\end{thm}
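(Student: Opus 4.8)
The plan is to deduce Theorem~\ref{thm 1.4} directly from Theorem~\ref{thm 1.3} by observing that hypothesis \eqref{eq1.7} is exactly the negation, made global, of the pointwise inequality \eqref{1.6}. Indeed, \eqref{eq1.7} asserts that the quantity
\[
\left(\sqrt{S-\fr1n H^{2}}-|\lambda|\fr{n-2}{2\sqrt{n(n-1)}}\right)^{2}+\fr1n(H-\lambda)^{2}-\fr{n\lambda^{2}}{4(n-1)}
\]
is $\geq 0$ at \emph{every} point of $M^n$, whereas the second alternative of Theorem~\ref{thm 1.3} requires the existence of a point $p$ at which this same quantity is strictly negative. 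Since \eqref{eq1.4-1} and \eqref{eq1.4-2} are assumed, Theorem~\ref{thm 1.3} applies; its two alternatives cannot hold simultaneously, so the second is excluded and $x$ must be totally umbilical, i.e. isometric either to a hyperbolic space $\mathbb{H}^{n}(c)$ with $c<0$ or to a Euclidean space $\mathbb{R}^{n}$.

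Next I would feed the umbilical geometry back into \eqref{eq1.1} and \eqref{eq1.7}. For a totally umbilical hypersurface the shape operator is $A=\ka I$ for a single principal curvature $\ka$, so that $S=n\ka^{2}$, $H^{2}=n^{2}\ka^{2}$, and the traceless defect $S-\fr1n H^{2}$ vanishes. Substituting this into \eqref{eq1.7} collapses the square-root term, and after simplifying the $\lambda^{2}$-coefficients (using $(n-2)^{2}-n^{2}=-4(n-1)$) the inequality \eqref{eq1.7} reduces to the clean statement $(H-\lambda)^{2}\geq\lambda^{2}$, equivalently $H(H-2\lambda)\geq 0$. For the Euclidean hyperplane one has $A=0$, hence $H=0$; the defining equation \eqref{eq1.1} with $s=\lagl x,x\ragl$ then forces $\lagl x,x\ragl\lagl x,N\ragl\equiv-\lambda$ to be constant, which, since $\lagl x,x\ragl$ is non-constant along a hyperplane not through the origin, is possible only when the hyperplane passes through the origin and $\lambda=0$. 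This yields conclusion~(2).

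The substantive case is the hyperbolic space $\mathbb{H}^{n}(c)$ with $c=-r^{-2}$, realized as the upper sheet of $\{\lagl x,x\ragl=-r^{2}\}$. Here the position vector is normal, $N=\pm x/r$, so $\lagl x,N\ragl=\mp r$, while $|\ka|=1/r$ and $S=n/r^{2}$; fixing the orientation compatible with the sign convention in \eqref{eq1.1} gives $H=n/r$ and, from \eqref{eq1.1} with $s=\lagl x,x\ragl=-r^{2}$, the relation $\lambda=H-s\lagl x,N\ragl=\fr nr-r^{3}$. Then $H-\lambda=r^{3}$, and the reduced inequality $(H-\lambda)^{2}\geq\lambda^{2}$ becomes $r^{6}\geq(\fr nr-r^{3})^{2}$; factoring the difference of squares as $(2r^{3}-\fr nr)\fr nr\geq 0$ and using $\fr nr>0$ yields $2r^{4}\geq n$, i.e. $r\geq(\fr n2)^{1/4}$. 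Finally, since $\lambda=\fr nr-r^{3}$ is strictly decreasing in $r$, the bound $r\geq(\fr n2)^{1/4}$ gives $\lambda\leq\fr{n}{(n/2)^{1/4}}-(\fr n2)^{3/4}=(\fr n2)^{3/4}$, which is precisely conclusion~(1).

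I expect the only real obstacle to be the sign bookkeeping in the Lorentzian setting. One must pin down the orientation and the Weingarten convention so that $\mathbb{H}^{n}(-r^{-2})$ acquires $H=n/r$ (rather than $-n/r$) and hence $\lambda=\fr nr-r^{3}$, because replacing $N$ by $-N$ sends $(\lambda,H)$ to $(-\lambda,-H)$ and would instead present the bound as $-\lambda\leq(\fr n2)^{3/4}$. Once the convention matching \eqref{eq1.1} is fixed, the remaining steps---reducing \eqref{eq1.7} in the umbilical case, carrying out the difference-of-squares factorization, and inverting the monotone relation between $\lambda$ and $r$---are routine algebra, so no analytic difficulty beyond Theorem~\ref{thm 1.3} itself is anticipated.
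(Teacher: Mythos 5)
Your proposal is correct and follows essentially the same route as the paper: exclude the second alternative of Theorem \ref{thm 1.3} via \eqref{eq1.7}, then in the totally umbilical case reduce \eqref{eq1.7} to $\frac1n(H^2-2\lambda H)\ge 0$, which for $\mathbb{H}^n(-r^{-2})$ with $x=rN$ and $\lambda=\frac nr-r^3$ gives $2r^4\ge n$ and hence $\lambda\le(\frac n2)^{3/4}$. Your treatment of the hyperplane case and of the orientation convention is slightly more explicit than the paper's, but the argument is the same.
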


\proof If the condition \eqref{eq1.7} is satisfied for a hypersurface $\mathbb{H}^{n}(-r^{-2})$ with $r>0$, then by the fact that $x=rN$ we have
$$\lambda=H-\lagl x,x\ragl\lagl x,N\ragl=\fr nr-r^3.$$
If follows that
\begin{align}
0\leq&\left(\sqrt{S-\fr1n H^{2}} -|\lambda|\frac{n-2}{2\sqrt{n(n-1)}}\right)^{2}+\frac{1}{n}(H-\lambda)^{2} -\frac{n\lambda^{2}}{4(n-1)}\nnm\\
=&\fr1n(H^2-2\lambda H)=\fr1{r^2}(2r^4-n). \label{1.8}
\end{align}
Therefore $r\geq \left(\fr n2\right)^{\fr14}$ which implies directly that $\lambda\leq \left(\fr n2\right)^{\fr34}$. As for the Euclidean space $\bbr^n$, $\lambda=0$ is direct by the definition of $\lambda$-hypersurfaces.\endproof

A similar corollary of Theorem \ref{thm 1.2} can also be derived, which is omitted here.

\begin{cor}\label{cor 1.5} Let $x: M^n\rightarrow \mathbb{R}^{n+1}_{1}$ be a complete space-like $\lambda$-hypersurface with $s=\langle x, x\rangle$. Suppose $S-\frac{H^{2}}{n}$ is constant. If \eqref{eq1.4-2} and \eqref{eq1.7} are satisfied,
then $x$ is isometric to either the hyperbolic space $\mathbb{H}^{n}(-r^{-2})\subset\mathbb{R}^{n+1}_{1}$ with $r\geq\left(\fr n2\right)^{\fr14}$ or the hyperplane $\mathbb{R}^{n}\subset\mathbb{R}^{n+1}_{1}$.
\end{cor}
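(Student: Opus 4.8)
The plan is to obtain Corollary \ref{cor 1.5} as an immediate specialization of Theorem \ref{thm 1.4}: the sole extra hypothesis of that theorem not already listed here is the integral bound \eqref{eq1.4-1}, and I would show that the constancy of $S-\fr{H^2}n$ forces \eqref{eq1.4-1} to hold automatically, after which Theorem \ref{thm 1.4} applies verbatim.

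First I would note that if $S-\fr{H^2}n$ is constant on $M^n$, then $\nabla\left(S-\fr{H^2}n\right)\equiv 0$. Next, the operator $\mathcal{\td L}$ of \eqref{2.14} is a linear second-order differential operator with no zeroth-order (undifferentiated) term, so it annihilates constant functions; hence $\mathcal{\td L}\left(S-\fr{H^2}n\right)\equiv 0$ as well. Therefore the integrand in \eqref{eq1.4-1} vanishes identically, the integral equals $0<+\infty$, and \eqref{eq1.4-1} is satisfied.

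With \eqref{eq1.4-1} secured, the hypotheses \eqref{eq1.4-2} and \eqref{eq1.7} are precisely those of Theorem \ref{thm 1.4}. Invoking that theorem, $x$ must be isometric either to $\mathbb{H}^n(-r^{-2})$ with $r\geq\left(\fr n2\right)^{\fr14}$ or to the hyperplane $\mathbb{R}^n\subset\mathbb{R}^{n+1}_1$; retaining only the geometric conclusions (and discarding the accompanying bounds on $\lambda$) yields exactly the statement of the corollary. The only step requiring any verification is that $\mathcal{\td L}$ annihilates constants, which is immediate from the explicit form of \eqref{2.14}; consequently I expect no genuine obstacle here, the result being a clean corollary of Theorem \ref{thm 1.4} in which the integrability condition is traded for the stronger pointwise hypothesis that $S-\fr{H^2}n$ be constant.
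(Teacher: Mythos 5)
Your proposal is correct and follows essentially the same route as the paper: the constancy of $S-\frac{H^{2}}{n}$ makes the integrand in \eqref{eq1.4-1} vanish identically (since $\mathcal{\td L}$ has no zeroth-order term and so annihilates constants), and the conclusion then follows directly from Theorem \ref{thm 1.4}. Your write-up merely spells out the ``trivially satisfied'' step that the paper leaves implicit.
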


\proof Since $S-\frac{H^{2}}{n}$ is constant, the condition \eqref{eq1.4-1} in Theorem \ref{thm 1.3} is trivially satisfied. Then Corollary \ref{cor 1.5} follows direct from Theorem \ref{thm 1.4}.

{\rmk\rm For the special case that $\lambda=0$, that is, for the ``{\em self-shrinker}'' case, the following two conclusions can be easily seen from Theorem \ref{thm 1.4}:

\begin{thm}\label{thm 1.6} Let $x: M^n\rightarrow \mathbb{R}^{n+1}_{1}$ be a complete space-like hypersurface self-shrinker with $s=\langle x, x\rangle$. Suppose that \eqref{eq1.4-1} and \eqref{eq1.4-2} are satisfied, then
$x$ is isometric to one of the following two embedded hypersurfaces:
\begin{enumerate}
\item the hyperbolic space $\mathbb{H}^{n}\left(-\fr1{\sqrt{n}}\right)\subset\mathbb{R}^{n+1}_{1}$;
\item the Euclidean space $\mathbb{R}^{n}\subset\mathbb{R}^{n+1}_{1}$.
\end{enumerate}
\end{thm}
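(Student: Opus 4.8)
The plan is to derive Theorem~\ref{thm 1.6} as an immediate specialization of Theorem~\ref{thm 1.4} to the self-shrinker case $\lambda=0$. First I would record that a complete space-like self-shrinker with $s=\langle x,x\rangle$ is, by the convention of this paper, precisely a complete space-like $\lambda$-hypersurface with $s=\langle x,x\rangle$ and $\lambda=0$; thus Theorem~\ref{thm 1.4} becomes available as soon as its hypotheses are checked.

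The key observation is that the inequality \eqref{eq1.7} holds automatically when $\lambda=0$. Indeed, putting $\lambda=0$ makes the terms $|\lambda|\frac{n-2}{2\sqrt{n(n-1)}}$ and $\frac{n\lambda^{2}}{4(n-1)}$ vanish, so the left-hand side of \eqref{eq1.7} collapses to
\[
\left(S-\frac{H^{2}}{n}\right)+\frac{1}{n}H^{2}=S\geq 0,
\]
which is always true. Since \eqref{eq1.4-1} and \eqref{eq1.4-2} are assumed in the statement, all the hypotheses of Theorem~\ref{thm 1.4} are met, and its conclusion yields that $x$ is isometric either to a hyperbolic space $\mathbb{H}^{n}(-r^{-2})$ with $r\geq(n/2)^{1/4}$, or to the Euclidean space $\mathbb{R}^{n}$.

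It then remains only to determine the radius in the hyperbolic case, and here I would reuse the computation already carried out in the proof of Theorem~\ref{thm 1.4}. For $\mathbb{H}^{n}(-r^{-2})$ one has $x=rN$, whence $\langle x,x\rangle=-r^{2}$, $\langle x,N\rangle=-r$, and the mean curvature is $H=n/r$. Substituting into the self-shrinker equation
\[
0=\lambda=H-\langle x,x\rangle\langle x,N\rangle=\frac{n}{r}-r^{3}
\]
forces $r^{4}=n$, that is $r=n^{1/4}$ and $-r^{-2}=-1/\sqrt{n}$. This isolates the single admissible curvature and gives the hyperbolic alternative $\mathbb{H}^{n}(-1/\sqrt{n})$, while the Euclidean alternative is inherited unchanged from Theorem~\ref{thm 1.4}.

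I expect no genuine obstacle: Theorem~\ref{thm 1.6} is a corollary rather than an independent result. The only point requiring a moment's thought is the automatic validity of \eqref{eq1.7} at $\lambda=0$, which becomes transparent once the cancellation above is noticed, and the radius computation is a one-line algebraic consequence of the self-shrinker condition.
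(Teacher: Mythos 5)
Your proposal is correct and follows exactly the paper's own route: Theorem~\ref{thm 1.6} is deduced from Theorem~\ref{thm 1.4} by observing that \eqref{eq1.7} reduces to $S\geq 0$ when $\lambda=0$, and the radius $r^{2}=\sqrt{n}$ is then pinned down from $0=\lambda=\frac{n}{r}-r^{3}$. The paper states these two steps more tersely, but the argument is the same.
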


\proof When $\lambda=0$, it is clear that \eqref{eq1.7} is trivially satisfied. Furthermore, for a hyperbolic space $\mathbb{H}^{n}(-r^{-2})\subset\mathbb{R}^{n+1}_{1}$, $\lambda=0$ also implies that $r^2=\sqrt{n}$.
\endproof

\begin{cor} Let $x: M^n\rightarrow \mathbb{R}^{n+1}_{1}$ be a complete space-like self-shrinker with $s=\langle x, x\rangle$. If $S-\frac{H^{2}}{n}$ is constant and
\eqref{eq1.4-2} is satisfied, then $x$ is isometric to the either the hyperbolic space $\mathbb{H}^{n}\left(-\fr1{\sqrt{n}}\right)\subset\mathbb{R}^{n+1}_{1}$ or the hyperplane $\mathbb{R}^{n}\subset\mathbb{R}^{n+1}_{1}$.
\end{cor}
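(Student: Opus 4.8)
The plan is to deduce this corollary directly from Theorem \ref{thm 1.6} by verifying that the integral hypothesis \eqref{eq1.4-1} is automatically fulfilled once $S-\frac{H^{2}}{n}$ is assumed to be constant. This parallels exactly the way Corollary \ref{cor 1.5} is obtained from Theorem \ref{thm 1.4}, specialized here to the self-shrinker case $\lambda=0$.

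First I would set $f:=S-\frac{H^{2}}{n}$ and observe that, $f$ being a constant function on $M^n$, its gradient vanishes identically, $\nabla f\equiv 0$. Moreover, since the operator $\mathcal{\td L}$ defined in \eqref{2.14} is a second-order differential operator carrying no zeroth-order term (it is built from a Laplacian-type leading part together with a first-order drift correction), it annihilates constants, so $\mathcal{\td L}f\equiv 0$ as well. Consequently the integrand in \eqref{eq1.4-1},
\[
\left(\left|\nabla f\right|+\left|\mathcal{\td L}f\right|\right)e^{-\frac{\langle x,x\rangle^{2}}{4}},
\]
is identically zero, and \eqref{eq1.4-1} therefore holds trivially with integral value $0<+\infty$.

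Next I would note that the remaining hypothesis \eqref{eq1.4-2} is assumed outright. Thus both hypotheses of Theorem \ref{thm 1.6}, namely \eqref{eq1.4-1} and \eqref{eq1.4-2}, are in force for the given complete space-like self-shrinker $x$. Applying Theorem \ref{thm 1.6} then yields immediately that $x$ is isometric either to the hyperbolic space $\mathbb{H}^{n}\left(-\frac{1}{\sqrt{n}}\right)$ or to the hyperplane $\mathbb{R}^{n}$ in $\mathbb{R}^{n+1}_{1}$, which is precisely the asserted conclusion.

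The argument is essentially immediate once Theorem \ref{thm 1.6} is available; the only point that genuinely requires checking is that $\mathcal{\td L}$ has no zeroth-order part, so that it vanishes on constants, and this is the single place where one must appeal to the explicit form of $\mathcal{\td L}$ given in \eqref{2.14}. Everything else is a direct substitution into the hypotheses of Theorem \ref{thm 1.6}, so I anticipate no serious obstacle: this corollary is simply the constant-$(S-\frac{H^{2}}{n})$, $\lambda=0$ specialization of Corollary \ref{cor 1.5}.
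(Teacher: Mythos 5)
Your proposal is correct and follows exactly the same route as the paper: the paper's own proof simply observes that the constancy of $S-\frac{H^{2}}{n}$ makes \eqref{eq1.4-1} trivially satisfied and then invokes Theorem \ref{thm 1.6}. Your additional remark that $\mathcal{\td L}$ annihilates constants (so both $|\nabla f|$ and $|\mathcal{\td L}f|$ vanish identically) merely spells out the detail the paper leaves implicit.
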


\proof  The assumption that $S-\frac{H^{2}}{n}$ is constant directly means that \eqref{eq1.4-1} is trivially satisfied. \endproof

\section{Preliminaries and necessary lemmas}

Firstly we fix the following convention for the ranges of indices:
$$1\leq i,j,k,\cdots \leq n,\quad 1\leq A,B,C\cdots\leq n+1.$$

Let $x: M^n\rightarrow\mathbb{R}_{1}^{n+1}$ be a connected space-like hypersurface of the $(n+1)$-dimensional Lorentzian space $\mathbb{R}_{1}^{n+1}$ and $\{e_{A}\}_{A=1}^{n+1}$ be a local orthonormal frame field of $\mathbb{R}_{1}^{n+1}$ along $x$ with dual coframe field $\{\omega^{A}\}_{A=1}^{n+1}$ such that, when restricted to $x$, $e_{1}, \ldots, e_{n}$ are tangent to $x$ and thus $N:=e_{n+1}$ is the unit normal vector of $x$. Then with the connection forms $\omega^B_A$ we have
$$dx=\sum_{i}\omega^{i}e_{i},\quad de_{i}=\sum_{j}\omega^{j}_{i}e_{j}+\omega_{i}^{n+1}e_{n+1}, \quad de_{n+1}=\sum_{i}\omega_{n+1}^{i}e_{i}.$$
By restricting these forms to $M^{n}$ and using Cartan's lemma, we have
$$\omega^{n+1}=0,\quad \omega_{i}^{n+1}=\omega^{i}_{n+1}=\sum_{j=1}^{n}h_{ij}\omega^{j}, \quad h_{ij}=h_{ji}, $$
where $h_{ij}$ are nothing but the components of the second fundamental form $h$ of $x$, that is, $h=\sum h_{ij}\omega^i\omega^j$. Then the mean curvature $H$ of $x$ is given by $H=\sum_{j=1}^{n}h_{jj}$. Denote
\be
h_{ijk}=(\nabla h)_{ijk}=(\nabla _{k}h)_{ij},\quad h_{ijkl}=(\nabla^2h)_{ijkl}=(\nabla_l(\nabla h))_{ijk}
\ee
where $\nabla$ is the Levi-Civita connection of the induced metric and $\nabla_i:=\nabla_{e_i}$. Then the Gauss equations, Codazzi equations and Ricci identities are given respectively by
\begin{align}
R_{ijkl}=&h_{ik}h_{jl}-h_{il}h_{jk},
\quad
h_{ijk}=h_{ikj},\label{eq2.1}
\\
h_{ijkl}&-h_{ijlk}=\sum_{m=1}^{n}h_{im}R_{jmkl}+\sum^{n}_{m=1}h_{mj}R_{imkl},\label{eq2.3}
\end{align}
where $R_{ijkl}$ are the components of the Riemannian curvature tensor. For a function $F$ defined on $M$, the covariant derivatives of $F$ are denoted by
$$F_{,i}=(\nabla F)_i=\nabla_{i}F,\quad F_{,ij}=(\nabla^2 F)_{ij}=(\nabla_{j}(\nabla F))_{i},\quad \cdots.$$
Let $\Delta$ be the Laplacian operator of the induced metric on $M^n$. In case that $\lagl x,x\ragl$ does not change its sign, we can define
\begin{equation}\label{2.4}
\mathcal{L}v=\triangle v-\eps a \langle x, \nabla v\rangle,\quad \forall v\in C^2(M^n),
\end{equation}
where, for any constant $a$, $\epsilon=\sgn(a\langle x, x\rangle)$. Then $\mathcal{L}$ is an elliptic operator and
\be\label{ad2.1}
\mathcal{L}v=e^{\frac{\eps a\langle x, x\rangle}{2}}\dv \left(e^{-\frac{\eps a\langle x, x\rangle}{2}}\nabla v\right),\quad \forall v\in C^2(M^n).
\ee

In fact, for $v\in C^2(M^n)$, we find
\begin{align*}
&e^{\frac{\eps a\langle x, x\rangle}{2}}\dv \left(e^{-\frac{\eps a\langle x, x\rangle}{2}}\nabla v\right)\\
=&e^{\frac{\eps a\langle x, x\rangle}{2}}\left(e^{-\frac{\eps a\langle x, x\rangle}{2}}\dv (\nabla v)+\left\langle \nabla e^{-\frac{\eps a\langle x, x\rangle}{2}}, \nabla v\right\rangle\right)\\
=&e^{\frac{\eps a\langle x, x\rangle}{2}}\left(e^{-\frac{\eps a\langle x, x\rangle}{2}}\triangle v+e^{-\frac{\eps a\langle x, x\rangle}{2}}(-\epsilon a\langle x, x_{i}\rangle)\langle e_{i}, \nabla v\rangle\right)\\
=&\triangle v-\epsilon a\langle x, \nabla v \rangle=\mathcal{L}v.\\
\end{align*}

\begin{lem}[cf. \cite{T-W}]\label{lem2.1} Let $x:M^n\to \mathbb{R}_{1}^{n+1}$ be a complete space-like hypersurface for which $\lagl x,x\ragl$ does not change its sign. Then, for any $C^{1}$-function $u$ on $M^n$ with compact support, it holds that
\begin{equation}\label{eq2.5}
\int_{M^n}u(\mathcal{L}v)e^{-\frac{\eps a\langle x, x\rangle}{2}}dV_{M^n}=-\int_{M^n}\langle \nabla v, \nabla u\rangle e^{-\frac{\eps a\langle x, x\rangle}{2}}dV_{M^n},\quad\forall v\in C^2(M^n).
\end{equation}
\end{lem}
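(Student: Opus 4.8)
The plan is to reduce \eqref{eq2.5} to the divergence theorem by exploiting the divergence-form expression \eqref{ad2.1} for $\mathcal{L}$ that has just been verified. Throughout I abbreviate $f=\frac{\eps a\langle x,x\rangle}{2}$, so that the weighted volume element is $e^{-f}\,dV_{M^n}$ and \eqref{ad2.1} reads $\mathcal{L}v=e^{f}\dv(e^{-f}\nabla v)$.

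First I would record the key pointwise identity. Multiplying \eqref{ad2.1} by $u\,e^{-f}$ gives
$$u(\mathcal{L}v)e^{-f}=u\,\dv(e^{-f}\nabla v),$$
and applying the Leibniz rule for the divergence of the product of the function $u$ and the vector field $e^{-f}\nabla v$,
$$\dv\big(u\,e^{-f}\nabla v\big)=u\,\dv(e^{-f}\nabla v)+e^{-f}\langle\nabla u,\nabla v\rangle,$$
I obtain
$$u(\mathcal{L}v)e^{-f}=\dv\big(u\,e^{-f}\nabla v\big)-e^{-f}\langle\nabla u,\nabla v\rangle.$$

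Next I would integrate this identity over $M^n$. Set $W=u\,e^{-f}\nabla v$; since $u$ has compact support and $v\in C^2(M^n)$, $W$ is a $C^1$ vector field that vanishes outside a compact set. Choosing a compact domain $\Omega\subset M^n$ with smooth boundary whose interior contains $\supp(u)$, the divergence theorem yields $\int_{\Omega}\dv(W)\,dV_{M^n}=\int_{\partial\Omega}\langle W,\nu\rangle\,dA=0$, the boundary term vanishing because $u\equiv 0$ on $\partial\Omega$. As $\dv(W)$ is supported in $\Omega$, this shows $\int_{M^n}\dv(W)\,dV_{M^n}=0$. Integrating the pointwise identity therefore gives exactly \eqref{eq2.5}.

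The only point needing care is that $M^n$ is merely complete and may be noncompact, so the divergence theorem cannot be applied to $M^n$ globally; it is precisely the compact support of $u$ that makes the boundary contribution disappear and removes any convergence issue, since every integrand involved is then compactly supported. The ellipticity of $\mathcal{L}$, guaranteed by the induced metric being Riemannian because $x$ is space-like, ensures that $\nabla v$, the divergence operator, and the weighted measure are all well defined, so no further analytic input is required.
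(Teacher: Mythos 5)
Your proof is correct and takes essentially the same route as the paper's: both rewrite $\mathcal{L}v$ in the divergence form \eqref{ad2.1}, apply the Leibniz rule to $\dv(u e^{-f}\nabla v)$, and kill the divergence term by integrating over a compact region containing $\supp u$ on whose boundary $u$ vanishes. The only cosmetic difference is that the paper splits into the compact and noncompact cases while you handle both uniformly with a single compactly supported vector field.
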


\begin{proof} By \eqref{ad2.1} we find
\begin{align*}
\int_{M^n}&u(\mathcal{L}v)e^{-\frac{\eps a\langle x, x\rangle}{2}}dV_{M^n}=\int_{M^n}u\left(e^{\frac{\eps a\langle x, x\rangle}{2}}\dv \left(e^{-\frac{\eps a\langle x, x\rangle}{2}}\nabla v\right)\right)e^{-\frac{\eps a\langle x, x\rangle}{2}}dV_{M^n}\\
=&\int_{M^n}u\,\dv \left(e^{-\frac{\eps a\langle x, x\rangle}{2}}\nabla v\right)dV_{M^n}\\
=&\int_{M^n}\left(\dv \left(u e^{-\frac{\eps a\langle x, x\rangle}{2}}\nabla v\right)-\left\langle \nabla u, e^{-\frac{\eps a\langle x, x\rangle}{2}}\nabla v\right\rangle\right)dV_{M^n}\\
=&\int_{M^n} \dv \left(u e^{-\frac{\eps a\langle x, x\rangle}{2}}\nabla v\right)dV_{M^n}-\int_{M^n}\langle \nabla u, \nabla v\rangle e^{-\frac{\eps a\langle x, x\rangle}{2}}dV_{M^n}.
\end{align*}
Hence there are two cases to be considered:

Case (1): $M^n$ is compact without boundary. In this case, we can directly use the divergence theorem to get
$$\int_{M^n}\dv \left(u e^{-\frac{\eps a\langle x, x\rangle}{2}}\nabla v\right)dV_{M^n}=0.$$

Case (2): $M^n$ is complete and noncompact. In this case, we can find a geodesic ball $B_{r}(o)$ big enough such that $\supp u\subset B_{r}(o)$. It follows that
\begin{align*}
\int_{M^n}&\dv \left(u e^{-\frac{\eps a\langle x, x\rangle}{2}}\nabla v\right)dV_{M^n}=\int_{B_{r}(o)}\dv \left(u e^{-\frac{\eps a\langle x, x\rangle}{2}}\nabla v\right)dV_{B_{r}(o)}\\
=&-\int_{\partial B_{r}(o)}\left\langle N, u e^{-\frac{\eps a\langle x, x\rangle}{2}}\nabla v \right \rangle dV_{\partial B_{r}(o)}
=0.
\end{align*}
It follows that
$$\int_{M^n}u(\mathcal{L}v)e^{-\frac{\eps a\langle x, x\rangle}{2}}dV_{M^n}=-\int_{M^n}\langle \nabla v, \nabla u\rangle e^{-\frac{\eps a\langle x, x\rangle}{2}}dV_{M^n}.$$
\end{proof}

\begin{cor}\label{cor2.2} Let $x: M^n\rightarrow\mathbb{R}_{1}^{n+1}$ be a complete space-like hypersurface. If $u,v $ are $C^2$-functions satisfying
\begin{equation}\label{eq2.6}
\int_{M^n}(|u\nabla v|+|\nabla u||\nabla v|+|u\mathcal{L} v|)e^{-\frac{\eps a\langle x, x\rangle}{2}} dV_{M^n}<+\infty,
\end{equation}
then we have
\begin{equation}
\int_{M^n}u(\mathcal{L}v)e^{-\frac{\eps a\langle x, x\rangle}{2}}dV_{M^n}=-\int_{M^n}\langle\nabla u, \nabla v\rangle e^{-\frac{\eps a\langle x, x\rangle}{2}}dV_{M^n}.
\end{equation}

\end{cor}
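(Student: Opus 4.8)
The plan is to deduce Corollary~\ref{cor2.2} from Lemma~\ref{lem2.1} by a cut-off and exhaustion argument, trading the compact-support hypothesis of the lemma for the integrability condition \eqref{eq2.6}. Fix a base point $o\in M^n$. Since $M^n$ is complete, for each $r>0$ there is a smooth cut-off function $\phi_r$ on $M^n$ with $0\le\phi_r\le 1$, $\phi_r\equiv 1$ on the geodesic ball $B_r(o)$, $\supp\phi_r\subset B_{2r}(o)$, and $|\nabla\phi_r|\le C/r$ for a constant $C$ independent of $r$; such functions are standard on complete Riemannian manifolds. The function $u\phi_r$ is then $C^1$ with compact support, so Lemma~\ref{lem2.1} applies to it. Using $\nabla(u\phi_r)=\phi_r\nabla u+u\nabla\phi_r$, this yields
\begin{align*}
\int_{M^n}u\phi_r(\mathcal{L}v)e^{-\frac{\eps a\langle x,x\rangle}{2}}dV_{M^n}
=&-\int_{M^n}\phi_r\langle\nabla u,\nabla v\rangle e^{-\frac{\eps a\langle x,x\rangle}{2}}dV_{M^n}\\
&-\int_{M^n}u\langle\nabla v,\nabla\phi_r\rangle e^{-\frac{\eps a\langle x,x\rangle}{2}}dV_{M^n}.
\end{align*}

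Next I would let $r\to\infty$ and treat the three integrals separately. For the left-hand side, $\phi_r\to 1$ pointwise and $|u\phi_r(\mathcal{L}v)|e^{-\frac{\eps a\langle x,x\rangle}{2}}\le|u(\mathcal{L}v)|e^{-\frac{\eps a\langle x,x\rangle}{2}}$, which is integrable by \eqref{eq2.6}; dominated convergence gives the limit $\int_{M^n}u(\mathcal{L}v)e^{-\frac{\eps a\langle x,x\rangle}{2}}dV_{M^n}$. The first integral on the right is handled the same way, with dominating function $|\nabla u||\nabla v|e^{-\frac{\eps a\langle x,x\rangle}{2}}\in L^1$ by \eqref{eq2.6}, so it tends to $-\int_{M^n}\langle\nabla u,\nabla v\rangle e^{-\frac{\eps a\langle x,x\rangle}{2}}dV_{M^n}$.

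The crux is the second integral on the right, the cut-off error. Since $\nabla\phi_r$ is supported in the annulus $B_{2r}(o)\setminus B_r(o)$ and $|\nabla\phi_r|\le C/r$,
\begin{align*}
\left|\int_{M^n}u\langle\nabla v,\nabla\phi_r\rangle e^{-\frac{\eps a\langle x,x\rangle}{2}}dV_{M^n}\right|
\le\frac{C}{r}\int_{M^n}|u||\nabla v|e^{-\frac{\eps a\langle x,x\rangle}{2}}dV_{M^n}\longrightarrow 0,
\end{align*}
because $|u\nabla v|e^{-\frac{\eps a\langle x,x\rangle}{2}}$ is integrable by \eqref{eq2.6}; this is precisely why the summand $|u\nabla v|$ is built into \eqref{eq2.6}. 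Passing to the limit in the displayed identity then gives the asserted equality. The main obstacle is exactly this vanishing of the cut-off term together with the applicability of dominated convergence, and both are guaranteed because the three dominating functions are tailored to be the three summands of \eqref{eq2.6}.
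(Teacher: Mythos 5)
Your argument is correct and follows essentially the same route as the paper: apply Lemma \ref{lem2.1} to the product of $u$ with a compactly supported cut-off, then let the cut-off exhaust $M^n$, using the three summands of \eqref{eq2.6} as the dominating functions for the three resulting integrals. The only cosmetic difference is that you kill the error term via the quantitative bound $|\nabla\phi_r|\le C/r$ while the paper uses $|\nabla\phi_j|\le 1$, $|\nabla\phi_j|\to 0$ and dominated convergence; both hinge on the integrability of $|u\nabla v|$ built into \eqref{eq2.6}.
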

\begin{proof} Within this proof, we will use square brackets $[\cdot]$ to denote weighted integrals
\begin{equation}
[f]=\int_{M^n}fe^{-\frac{\eps a\langle x, x\rangle}{2}}dV_{M^n}.
\end{equation}
 Given any $\phi$ that is $C^1$-with compact support, we can apply Lemma \ref{lem2.1} to $\phi u$ and $v$ to get
\begin{equation}\label{eq2.9}
[\phi u\mathcal{L}v]=-[\phi \langle \nabla v, \nabla u\rangle]-[u\langle \nabla v, \nabla \phi \rangle].
\end{equation}
Now we fix one point $o\in M$ and, for each $j=1,2,\cdots$, let $B_j$ be the intrinsic ball of radius $j$ in $M^n$ centered at $o$. Define $\phi_{j}$ to be one smooth cutting-off function on $M^n$ that cuts off linearly from one to zero between $B_{j}$ and $B_{j+1}$. Since $|\phi_{j}|$ and $|\nabla \phi_{j}|$ are bounded by one, $\phi_{j}\rightarrow 1$ and $|\nabla \phi_{j}|\rightarrow 0$, as $j\to+\infty$. Then the dominated convergence theorem (which applies because of $\eqref{eq2.6}$) shows that, as $j\to+\infty$, we have the following limits:
\begin{equation}
[\phi_{j} u\mathcal{L}v]\rightarrow[u\mathcal{L}v],
\end{equation}
\begin{equation}
[\phi_{j}\langle \nabla v, \nabla u\rangle]\rightarrow [\langle \nabla v, \nabla u\rangle],
\end{equation}
\begin{equation}
[u\langle \nabla v, \nabla \phi \rangle] \rightarrow 0.
\end{equation}
Replacing $\phi$ in \eqref{eq2.9} with $\phi_j$, we obtain the corollary.
\end{proof}

Next we consider the case that $s=\langle x, x\rangle$ and define
\begin{equation}\label{2.14}
\mathcal{\td L}v=\triangle v-\langle x, x\rangle \langle x, \nabla v\rangle,\quad \forall v\in C^2(M^n).
\end{equation}
Then, similar to \eqref{ad2.1}, we have for all $v\in C^2(M^n)$,
\begin{align}
&e^{\frac{\langle x, x\rangle^{2}}{4}}\dv \left(e^{-\frac{\langle x, x\rangle^{2}}{4}}\nabla v\right)\nnm\\
=&e^{\frac{\langle x, x\rangle^{2}}{4}}\left(e^{-\frac{\langle x, x\rangle^{2}}{4}}\dv (\nabla v)+\left\langle \nabla e^{-\frac{\langle x, x\rangle^{2}}{4}}, \nabla v\right\rangle\right)\nnm\\
=&e^{\frac{\langle x, x\rangle^{2}}{4}}\left(e^{-\frac{\langle x, x\rangle^{2}}{4}}\triangle v+e^{-\frac{\langle x, x\rangle^{2}}{4}}\left(- \frac{2\langle x, x\rangle}{4}2\langle x, x_{i}\rangle\right)\langle e_{i}, \nabla v\rangle\right)\nnm\\
=&\triangle v- \langle x, x\rangle\langle x, \nabla v \rangle=\mathcal{\td L}v.\label{ad2.2}
\end{align}

\begin{lem}\label{lem2.3} If $x:M^n\to \mathbb{R}^{n+1}_{1}$ is a complete space-like hypersurface, $u$ is a $C^1$-function with compact support, and $v$ is a $C^2$-function, then
\begin{equation}
\int_{M^n}u(\mathcal{\td L}v)e^{-\frac{\langle x, x\rangle^{2}}{4}}dV_{M^n}=-\int_{M^n}\langle \nabla v, \nabla u\rangle e^{-\frac{\langle x, x\rangle^{2}}{4}}dV_{M^n}.
\end{equation}
\end{lem}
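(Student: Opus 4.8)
The plan is to imitate verbatim the proof of Lemma \ref{lem2.1}, with the weight $e^{-\frac{\eps a\langle x, x\rangle}{2}}$ replaced by $e^{-\frac{\langle x, x\rangle^{2}}{4}}$ and the integration-by-parts identity \eqref{ad2.1} replaced by its counterpart \eqref{ad2.2}. First I would insert the divergence-form expression for $\mathcal{\td L}v$ supplied by \eqref{ad2.2} into the left-hand integrand. The factor $e^{\frac{\langle x, x\rangle^{2}}{4}}$ produced by \eqref{ad2.2} cancels against the weight $e^{-\frac{\langle x, x\rangle^{2}}{4}}$ already present, so the left-hand side collapses to $\int_{M^n}u\,\dv\!\left(e^{-\frac{\langle x, x\rangle^{2}}{4}}\nabla v\right)dV_{M^n}$.

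Next I would apply the Leibniz rule $\dv(uW)=u\,\dv W+\langle\nabla u,W\rangle$ with the vector field $W=e^{-\frac{\langle x, x\rangle^{2}}{4}}\nabla v$. This rewrites the integrand as the total divergence $\dv\!\left(u\,e^{-\frac{\langle x, x\rangle^{2}}{4}}\nabla v\right)$ minus the term $\langle\nabla u,\nabla v\rangle\,e^{-\frac{\langle x, x\rangle^{2}}{4}}$, the latter being exactly the desired right-hand side. Hence the whole lemma reduces to the single claim that $\int_{M^n}\dv\!\left(u\,e^{-\frac{\langle x, x\rangle^{2}}{4}}\nabla v\right)dV_{M^n}=0$.

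To establish this vanishing I would split into the same two cases as in Lemma \ref{lem2.1}. If $M^n$ is compact without boundary, the divergence theorem gives zero immediately. If $M^n$ is complete and noncompact, I would choose a geodesic ball $B_r(o)$ large enough that $\supp u\subset B_r(o)$, apply the divergence theorem on $B_r(o)$, and observe that the resulting boundary integral over $\partial B_r(o)$ vanishes because $u\equiv 0$ there.

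The hard part is essentially nonexistent: the argument is a direct transcription of Lemma \ref{lem2.1}, the only genuine input being the identity \eqref{ad2.2}, which is already verified in the excerpt. It is worth emphasizing, however, that the compact support of $u$ is precisely what makes the boundary term disappear outright; thus, in contrast to Corollary \ref{cor2.2}, no auxiliary integrability hypothesis on $u$ and $v$ is needed here.
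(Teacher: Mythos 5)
Your proposal is correct and reproduces the paper's own argument essentially verbatim: insert the divergence-form identity \eqref{ad2.2}, apply the Leibniz rule, and kill the total-divergence term by the divergence theorem (using compactness of $\supp u$ in the noncompact case). Your closing remark that the compact support of $u$ is what obviates any integrability hypothesis, in contrast with Corollary \ref{cor2.4}, is an accurate observation consistent with the paper.
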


\begin{proof} Using \eqref{ad2.2} we have
\begin{align*}
\int_{M^n}&u(\mathcal{\td L}v)e^{-\frac{\langle x, x\rangle^{2}}{4}}dV_{M^n}=\int_{M^n}u\left(e^{\frac{\langle x, x\rangle^{2}}{4}}\dv \left(e^{-\frac{\langle x, x\rangle^{2}}{4}}\nabla v\right)\right)e^{-\frac{\langle x, x\rangle^{2}}{4}}dV_{M^n}\\
=&\int_{M^n}u\,\dv \left(e^{-\frac{\langle x, x\rangle^{2}}{4}}\nabla v\right)dV_{M^n}\\
=&\int_{M^n}\left(\dv \left(u e^{-\frac{\langle x, x\rangle^{2}}{4}}\nabla v\right)-\left\langle \nabla u, e^{-\frac{\langle x, x\rangle^{2}}{4}}\nabla v\right\rangle\right)dV_{M^n}\\
=&\int_{M^n}\dv \left(u e^{-\frac{\langle x, x\rangle^{2}}{4}}\nabla v\right)dV_{M^n}-\int_{M^n}\langle \nabla u, \nabla v\rangle e^{-\frac{\langle x, x\rangle^{2}}{4}}dV_{M^n}.
\end{align*}

 (1) If $M^n$ is compact without boundary, then by the divergence theorem,
 $$\int_{M^n}\dv \left(u e^{-\frac{\langle x, x\rangle^{2}}{4}}\nabla v\right)dV_{M^n}=0.$$

 (2) If $M^n$ is complete and noncompact, then there exists some geodesic ball $B_{r}(o)$ big enough such that $\supp u \subset B_{r}(o)$. It follows that
\begin{align*}
&\int_{M^n}\dv \left(u e^{-\frac{\langle x, x\rangle^{2}}{4}}\nabla v\right)dV_{M^n}=\int_{B_{r}(o)}\dv \left(u e^{-\frac{\langle x, x\rangle^{2}}{4}}\nabla v\right)dV_{B_{r}(o)}\\
=&-\int_{\partial B_{r}(o)}\left\langle N, u e^{-\frac{\langle x, x\rangle^{2}}{4}}\nabla v\right\rangle dV_{\partial B_{r}(o)}
=0.
\end{align*}
Therefore
$$\int_{M^n}u(\mathcal{\td L}v)e^{-\frac{\langle x, x\rangle^{2}}{4}}dV_{M^n}=-\int_{M^n}\langle \nabla v, \nabla u\rangle e^{-\frac{\langle x, x\rangle^{2}}{4}}dV_{M^n}.$$
\end{proof}

\begin{cor}\label{cor2.4} Let $x: M^n\rightarrow \mathbb{R}^{n+1}_{1}$ be a complete space-like hypersurface. If $u,v$ are $C^2$-functions satisfying
\begin{equation}\label{eq2.15}
\int_{M^n}(|u\nabla v|+|\nabla u||\nabla v|+|u\mathcal{\td L}v|)e^{-\frac{\langle x, x\rangle^{2}}{4}}dV_{M^n}<+\infty,
\end{equation}
then we have
\begin{equation}
\int_{M^n}u(\mathcal{\td L}v)e^{-\frac{\langle x, x\rangle^{2}}{4}}dV_{M^n}=-\int_{M^n}\langle \nabla u, \nabla v\rangle e^{-\frac{\langle x, x\rangle^{2}}{4}}dV_{M^n}.
\end{equation}
\end{cor}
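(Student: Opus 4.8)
The plan is to follow the proof of Corollary \ref{cor2.2} essentially verbatim, with Lemma \ref{lem2.3} playing the role there played by Lemma \ref{lem2.1} and the weight $e^{-\frac{\langle x, x\rangle^{2}}{4}}$ replacing $e^{-\frac{\eps a\langle x, x\rangle}{2}}$. To this end I would first adopt the abbreviation
$$[f]=\int_{M^n}fe^{-\frac{\langle x, x\rangle^{2}}{4}}dV_{M^n}$$
for the weighted integral, so that the hypothesis \eqref{eq2.15} reads $[|u\nabla v|]+[|\nabla u||\nabla v|]+[|u\mathcal{\td L}v|]<+\infty$ and the desired identity becomes $[u\mathcal{\td L}v]=-[\langle\nabla u,\nabla v\rangle]$.

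Next, for any $C^1$-function $\phi$ with compact support I would apply Lemma \ref{lem2.3} with its first argument taken to be $\phi u$ (which is again $C^1$ with compact support) and its second argument $v$. Using $\nabla(\phi u)=\phi\nabla u+u\nabla\phi$ this yields
$$[\phi u\mathcal{\td L}v]=-[\langle\nabla v,\nabla(\phi u)\rangle]=-[\phi\langle\nabla v,\nabla u\rangle]-[u\langle\nabla v,\nabla\phi\rangle],$$
the exact analogue of \eqref{eq2.9}.

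The remaining step is a cutoff-and-limit argument identical to that in Corollary \ref{cor2.2}. Fixing $o\in M$ and letting $\phi_j$ be the smooth cutting-off function that cuts off linearly from one on $B_j$ to zero outside $B_{j+1}$, one has $|\phi_j|\leq 1$, $|\nabla\phi_j|\leq 1$, $\phi_j\to 1$ and $|\nabla\phi_j|\to 0$ pointwise as $j\to+\infty$. The three integrability conditions in \eqref{eq2.15} supply integrable dominating functions for the dominated convergence theorem, giving
$$[\phi_j u\mathcal{\td L}v]\to[u\mathcal{\td L}v],\quad [\phi_j\langle\nabla v,\nabla u\rangle]\to[\langle\nabla v,\nabla u\rangle],\quad [u\langle\nabla v,\nabla\phi_j\rangle]\to 0.$$
Substituting $\phi=\phi_j$ in the displayed identity and passing to the limit yields the corollary. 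Since this is a line-by-line transcription of an already established argument, there is no genuine obstacle; the only point requiring attention is checking that each of the three hypotheses in \eqref{eq2.15} is precisely what dominates the corresponding term, in particular that $|u\nabla v|$ (together with the bounds $|\nabla\phi_j|\leq 1$ and $|\nabla\phi_j|\to 0$) controls $[u\langle\nabla v,\nabla\phi_j\rangle]$ and forces it to zero.
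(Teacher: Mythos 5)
Your proof is correct and follows essentially the same route as the paper, which itself simply transfers the argument of Corollary \ref{cor2.2} to the weight $e^{-\frac{\langle x,x\rangle^{2}}{4}}$: apply Lemma \ref{lem2.3} to $\phi u$ and $v$, then let the linear cutoffs $\phi_j$ tend to $1$ using dominated convergence, with each term of \eqref{eq2.15} dominating the corresponding integrand exactly as you indicate.
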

\proof The proof here is same as that of Corollary \ref{cor2.2} and is omitted.\endproof

The following lemma is also needed in this paper:

\begin{lem}[\cite{O}]\label{lem2.5}
Let $\mu_{1}, \cdots, \mu_{n}$ be real numbers satisfying
\begin{align*}
\sum_{i}\mu_{i}=0,\quad\sum_{i}\mu_{i}^{2}=\beta^{2},
\end{align*}
with $\beta$ a nonnegative constant. Then
\begin{align*}
-\frac{n-2}{\sqrt{n(n-1)}}\beta^{3}\leq\sum_{i} \mu_{i}^{3}\leq\frac{n-2}{\sqrt{n(n-1)}}\beta^{3}
\end{align*}
with either equality holds if and only if $(n-1)$ of $\mu_{i}$ are equal to each other.
\end{lem}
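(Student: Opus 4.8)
The plan is to treat this as a constrained optimization problem and to identify the extremal configurations explicitly. The constraint set
\[
\Sigma=\Big\{(\mu_1,\dots,\mu_n):\ \textstyle\sum_i\mu_i=0,\ \sum_i\mu_i^2=\beta^2\Big\}
\]
is the intersection of a hyperplane through the origin with a sphere of radius $\beta$, hence is compact (an $(n-2)$-sphere when $\beta>0$, and a single point when $\beta=0$, in which case all $\mu_i=0$ and the lemma is trivial). Therefore the continuous function $F(\mu)=\sum_i\mu_i^3$ attains its maximum and minimum on $\Sigma$, and it suffices to locate these extreme values and the points realizing them. Moreover $\Sigma$ is invariant under $\mu\mapsto-\mu$, which sends $F\mapsto-F$, so the minimum is the negative of the maximum and it is enough to analyze one of them.

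First I would apply Lagrange multipliers. At an extremum there are constants $\kappa,\ell$ with $3\mu_i^2=\kappa+2\ell\mu_i$ for every $i$; that is, each $\mu_i$ is a root of the single quadratic $3t^2-2\ell t-\kappa=0$. Since a quadratic has at most two distinct roots, at any extremal point the numbers $\mu_1,\dots,\mu_n$ take at most two distinct values. Say a value $a$ occurs with multiplicity $p$ and a value $b$ with multiplicity $q=n-p$, where $1\le p\le n-1$ (a single value is impossible for $\beta>0$, since $\sum_i\mu_i=0$ would force $a=0$ and hence $\beta=0$). Solving the two constraints in this reduced situation gives $b=-pa/q$ and $a^2=q\beta^2/(np)$, and a short computation then yields
\[
F=pa^3+qb^3=\frac{q-p}{\sqrt{npq}}\,\beta^3\cdot\sgn(a),
\qquad\text{so}\qquad
|F|=\frac{|n-2p|}{\sqrt{np(n-p)}}\,\beta^3 .
\]

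Next I would maximize $\phi(p):=\dfrac{(n-2p)^2}{p(n-p)}$ over the integers $1\le p\le n-1$. Viewed as a function of a real variable,
\[
\phi'(p)=-\frac{(n-2p)\big[4p(n-p)+(n-2p)^2\big]}{\big(p(n-p)\big)^2},
\]
which is negative on $(0,n/2)$; combined with the symmetry $\phi(p)=\phi(n-p)$, this shows the maximum over the admissible integers is attained exactly at $p=1$ and $p=n-1$, where $\phi=(n-2)^2/(n-1)$. Hence $|F|\le\frac{n-2}{\sqrt{n(n-1)}}\beta^3$, which (with the sign symmetry) gives both displayed inequalities. Tracing back, equality forces the extremal splitting $\{p,q\}=\{1,n-1\}$, i.e.\ exactly $n-1$ of the $\mu_i$ coincide; conversely, substituting such a configuration reproduces equality, which settles the equality clause. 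The main obstacle is not any single computation but the two structural steps: using the Lagrange condition to collapse the problem to at most two distinct values, and the subsequent discrete optimization over the multiplicity $p$, where one must verify that the most unbalanced splitting $\{1,n-1\}$—rather than a balanced one—realizes the bound.
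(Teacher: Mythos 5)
Your argument is correct, and it is worth noting at the outset that the paper itself offers no proof of this lemma: it is simply quoted from Okumura's 1974 paper \cite{O}, so any proof you give is necessarily ``a different route from the paper.'' Your route is the standard variational one and it works: $\Sigma$ is compact, the antipodal symmetry reduces everything to the maximum, the Lagrange condition $3\mu_i^2=\kappa+2\ell\mu_i$ collapses an extremal point to at most two values, and the computation $|F|=|n-2p|\beta^3/\sqrt{np(n-p)}$ together with $\phi'(p)=-n^2(n-2p)/(p(n-p))^2$ (note that $4p(n-p)+(n-2p)^2=n^2$, which makes the monotonicity on $(0,n/2)$ immediate and \emph{strict}, so the maximizing multiplicities are exactly $p\in\{1,n-1\}$ and the equality clause follows) yields the stated bound. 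The only point you should make explicit is the constraint qualification for Lagrange multipliers: the gradients $(1,\dots,1)$ and $2(\mu_1,\dots,\mu_n)$ are linearly dependent only when all $\mu_i$ coincide, which on $\Sigma$ forces $\mu=0$ and $\beta=0$; since that case is handled separately, the multiplier rule is legitimate at every point of $\Sigma$ when $\beta>0$. For comparison, Okumura's original argument (and the Alencar--do Carmo refinement that is usually cited) is purely algebraic: one isolates a single $\mu_i$, uses $\mu_i=-\sum_{j\neq i}\mu_j$ and Cauchy--Schwarz to get $\mu_i^2\leq\frac{n-1}{n}\beta^2$, and then estimates $\sum_i\mu_i^3$ term by term. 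That proof avoids any appeal to compactness or calculus on manifolds, but the equality discussion is more delicate there; your approach trades a little analytic machinery for a very transparent identification of the extremal configurations.
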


\section{Proof of the main theorems}In this section, we give the proofs of our main theorems.

(1) \textsl{Proof of Theorem 1.2}

Since $H-\eps a\langle x, N\rangle=\lambda$, we have
$$\begin{aligned}
H_{,i}=(\lambda+\eps a\langle x, N\rangle),_{i}=\eps a\langle x, N\rangle_{,i}=\sum_{k}\eps a h_{ik}\langle x, e_{k}\rangle
\end{aligned},$$
$$\begin{aligned}
H_{ij}=&\sum_{k}\eps a h_{ikj}\langle x, e_{k}\rangle +\sum_{k}\eps a h_{ik}\langle X_{j}, e_{k}\rangle +\sum_{k}\eps a h_{ik}\langle x, e_{k,j}\rangle\\
=&\sum_{k}\eps a h_{ikj}\langle x, e_{k}\rangle+\eps a h_{ij}+\sum_{k}\eps a h_{ik}h_{kj}\langle x, N\rangle \\
=&\sum_{k}\eps a h_{ikj}\langle x, e_{k} \rangle+\eps a h_{ij}+\sum_{k} h_{ik}h_{kj}(H-\lambda).
\end{aligned}$$
Using the Codazzi equation in \eqref{eq2.1} we infer
$$\triangle H=\sum_{i}H_{,ii}=\eps a \langle x, \nabla H\rangle+\eps a H+S(H-\lambda),$$
where $S=\sum_{i,k}h_{ik}^{2}$.
It then follows that
\begin{align*}
\mathcal{L}H=\triangle H-\eps a\langle x, \nabla H\rangle=a
\epsilon H+S(H-\lambda),
\end{align*}
implying that
\begin{align}
\frac{1}{2}\mathcal{L}H^{2}=&\frac{1}{2}(\triangle H^{2}-\eps a \langle x, \nabla H^{2}\rangle)\nnm\\
=&\frac{1}{2}\left(\sum_{i}(H^{2})_{,ii}-\eps a \langle x, \nabla H^{2}\rangle\right)\nnm\\
=&\frac{1}{2}(2|\nabla H|^{2}+2H\triangle H-2\eps a H \langle x, \nabla H\rangle)\nnm\\
=&|\nabla H|^{2}+H(\triangle H-\eps a \langle x, \nabla H\rangle)\nnm\\
=&|\nabla H|^{2}+\eps a H^{2} +SH(H-\lambda).\label{ad3.9}
\end{align}
By making use of the Ricci identities and the Gauss-Codazzi equations, we have
\begin{align*}
\mathcal{L}h_{ij}=&\triangle h_{ij}-\eps a \langle x, \nabla h_{ij}\rangle
=\sum_{k}h_{ki,jk}-\eps a \langle x, \nabla h_{ij}\rangle\\
=&\sum_{k}h_{ki,kj}+\sum_{m,k}h_{mi}R^{m}_{kjk}+\sum_{k,m}h_{km}R^{m}_{ijk}-\eps a \langle x, \nabla h_{ij}\rangle\\
=&\sum_{k}h_{kk,j}+\sum_{m,k}h_{mi}R_{kmjk}+\sum_{k,m}h_{km}R_{imjk}-\eps a \langle x, \nabla h_{ij}\rangle\\
=&H_{,ij}-H\sum_{m}h_{im}h_{mj}+Sh_{ij}-\eps a \langle x, \nabla h_{ij}\rangle\\
=&(\eps a +S)h_{ij}-\lambda \sum_{k}h_{ik}h_{kj} \vspace{0.2cm}.
\end{align*}
Therefore, it holds that
\begin{align}
\frac{1}{2}\mathcal{L}S=&\frac{1}{2}\left(\triangle \sum_{i,j}(h_{ij})^{2}-\sum_{k}\eps a \langle x, e_{k}\rangle\left(\sum_{i,j}(h_{ij})^{2}\right)_{,k}\right)\nnm\\
=&\sum_{i,j,k}h_{ijk}^{2}+(\eps a+S)S-\lambda \sum_{i,j,k}h_{ik}h_{kj}h_{ij}\nnm\\
=&\sum_{i,j,k}h_{ijk}^{2}+(\eps a+S)S-\lambda f_{3},\label{ad3.10}
\end{align}
where $$f_{3}=\sum_{i,j,k}h_{ij}h_{jk}h_{ki}.$$
Let $\lambda_i$ be the principal curvatures of $x$ and denote $$\mu_{i}=\lambda _{i}-\frac{H}{n},\quad 1\leq i\leq n.$$
For any point $p\in M^n$, suitably choosing $\{e_{1},e_{2},\cdots,e_{n}\}$ around $p$ such that $h_{ij}(p)=\lambda_{i}(p)\delta_{ij}$. Then, at the given point $p$,
$$f_{3}=\sum_{i}\lambda_{i}^{3}=\sum_{i}\left(\mu_{i} +\frac{H}{n}\right)^{3}=B_{3}+\frac{3}{n}HB+\frac{1}{n^{2}}H^{3},$$
where
\begin{align*}
B=\sum_{i}\mu_{i}^{2}=S-\frac{H^{2}}{n},\quad B_{3}=\sum_{i}\mu_{i}^{3}.
\end{align*}
By a direct computation with \eqref{ad3.9} and \eqref{ad3.10}, we have
\begin{align*}
\frac{1}{2}\mathcal{L}B=&\frac{1}{2}\mathcal{L}S -\frac{1}{n}\left(\frac{1}{2}\mathcal{L}H^{2}\right) \\
=&\sum_{i,j,k}h_{ijk}^{2}+(\eps a +S)S-\lambda f_{3}-\frac{1}{n}(|\nabla H|^{2} +\eps a H^{2}+SH(H-\lambda))\\
=&\sum_{i,j,k}h_{ijk}^{2}-\frac{1}{n}|\nabla H|^{2}+(\eps a +S)S-\lambda f_{3} -\frac{1}{n}\eps a H^{2}-S(H-\lambda)\frac{H}{n}\\
=&\sum_{i,j,k}h_{ijk}^{2}-\frac{1}{n}|\nabla H|^{2}+(\eps a+B)B +\frac{H^{2}B}{n}-\lambda B_{3}-\frac{2}{n}\lambda HB.
\end{align*}
Since
\begin{center}
$\sum_{i}\mu_{i}=0$, \quad $\sum_{i}\mu_{i}^{2}=B$,
\end{center}
we have by Lemma \ref{lem2.5}
$$|B_{3}|\leq\frac{n-2}{\sqrt{n(n-1)}}B^{\frac{3}{2}},$$
where the equality holds if and only if at least $n-1$ of $\mu_{i}$s are equal. Consequently,
\begin{align*}
\frac{1}{2}\mathcal{L}B\geq &\sum_{i,j,k}h_{ijk}^{2}-\frac{1}{n}|\nabla H|^{2}+(\eps a+B)B +\frac{1}{n}H^{2}B-|\lambda|\frac{n-2}{\sqrt{n(n-1)}}B^{\frac{3}{2}} -\frac{2}{n}\lambda HB \\
=&\sum_{i,j,k}h_{ijk}^{2}-\frac{1}{n}|\nabla H|^{2}+B\left((B+\eps a)+\frac{1}{n}H^{2} - |\lambda|\frac{n-2}{\sqrt{n(n-1)}}B^{\frac{1}{2}}-\frac{2}{n}\lambda H\right)\\
=&\sum_{i,j,k}h_{ijk}^{2}-\frac{1}{n}|\nabla H|^{2}+B\left(\left(\sqrt{B} -|\lambda|\frac{n-2}{2\sqrt{n(n-1)}}\right)^{2} + \frac{1}{n}(H-\lambda)^{2}+\eps a -\frac{n\lambda ^{2}}{4(n-1)}\right).
\end{align*}
Because of \eqref{eq1.2}, we can apply Corollary \ref{cor2.2} to functions $1$ and $B=S-\frac{H^{2}}{n}$ to obtain
\begin{align}
0\geq &\int_{M^n}\left(\sum_{i,j,k}h_{ijk}^{2}-\frac{1}{n}|\nabla H|^{2}\right)e^{-\epsilon\frac{a \langle x, x\rangle}{2}}dV_{M^n}\nnm\\
&+\int_{M^n}B\left(\left(\sqrt{S -\frac{H^{2} }{n}}-|\lambda|\frac{n-2}{2\sqrt{n(n-1)}}\right)^{2}+\frac{1}{n}(H -\lambda)^{2}-\frac{n\lambda^{2}}{4(n-1)}+\eps a\right)e^{-\epsilon\frac{a \langle x, x\rangle}{2}}dV_{M^n}.\label{ad3.1}
\end{align}
On the other hand, by use of the Codazzi equations and the Schwarz inequality, we find
\begin{align*}
\sum_{i,j,k}h_{ijk}^{2}=3\sum_{i\neq k}h_{iik}^{2}+\sum_{i}h_{iii}^{2}+\sum_{i\neq j\neq k\neq i}h_{ijk}^{2}, \quad \frac{1}{n}|\nabla H|^{2}\leq \sum_{i,k}h_{iik}^{2}.
\end{align*}
So that
\be\label{ad3.4}\sum_{i,j,k}h_{ijk}^{2}-\frac{1}{n}|\nabla H|^{2}\geq 2\sum_{i\neq k}h_{iik}^{2}+\sum_{i\neq j\neq k \neq i}h_{ijk}^{2}\geq 0,\ee
in which the equalities hold if and only if $h_{ijk}=0$ for any $i,j,k$.

If $B\not\equiv 0$ and, for all $p\in M^n$, \eqref{1.3} does not hold, that is
$$\left(\sqrt{S -\frac{H^{2} }{n}}-|\lambda|\frac{n-2}{2\sqrt{n(n-1)}}\right)^{2}+\frac{1}{n}(H -\lambda)^{2}-\frac{n\lambda^{2}}{4(n-1)}+\eps a \geq 0$$
everywhere on $M^n$, then the right hand side of \eqref{ad3.1} is nonnegative. It then follows that
\be\label{ad3.2}\sum_{i,j,k}h_{ijk}^{2}-\frac{1}{n}|\nabla H|^{2}\equiv 0,\ee
and
\be\label{ad3.3}
\left(\sqrt{S -\frac{H^{2} }{n}}-|\lambda|\frac{n-2}{2\sqrt{n(n-1)}}\right)^{2}+\frac{1}{n}(H -\lambda)^{2}-\frac{n\lambda^{2}}{4(n-1)}+\eps a\equiv 0
\ee
on where $B\neq 0$. By \eqref{ad3.4} and \eqref{ad3.2}, the second fundamental form $h$ of $x$ is parallel. In particular, $x$ is isoparametric and thus both $B$ and $H$ are constant.
Since $B\neq 0$, the equality \eqref{ad3.3} shows that $x$ is a complete isoparametric space-like hypersurface in $\bbr^{n+1}_1$ of exactly two distinct principal curvatures one of which is simple. It then follows by \cite{HBL} and $B\neq 0$ that $x$ is isometric to one of the product spaces $\mathbb{H}^{n-1}(c)\times \bbr^1\subset\bbr^{n+1}_1$ and $\mathbb{H}^1(c)\times \bbr^{n-1}\subset\bbr^{n+1}_1$. But it is clear that, for both of these two product spaces, the function $\lagl x,x\ragl$ does change its sign, contradicting the assumption. This contradiction proves that either $B\equiv 0$, namely, $x$ is totally umbilical and isometric to either of the hyperbolic $n$-space $\mathbb{H}^n(c)\subset\bbr^{n+1}_1$ and the Euclidean $n$-space $\bbr^n\subset\bbr^{n+1}_1$, or there exists some $p\in M^n$ such that \eqref{1.3} holds.

The proof of Theorem \ref{thm 1.2} is thus finished.\endproof

(2) \textsl{Proof of Theorem 1.3}:

Since the idea and method here are the same as those in the proof of Theorem
\ref{thm 1.2}, we omit the computation detail.

First, by $H-\langle x, x\rangle\langle x, N\rangle=\lambda$, we have
\begin{align*}
H_{,i}=&2\langle x, e_{i}\rangle\langle x, N\rangle+\langle x, x\rangle\sum_{k}h_{ik}\langle x, e_{k}\rangle,\\
H_{,ij}=&2\delta_{ij}\langle x, N\rangle+2h_{ij}\langle x, N\rangle^{2}+2\sum_{k}h_{jk}\langle x, e_{i}\rangle\langle x, e_{k}\rangle\\
&+2\sum_{k}h_{ik}\langle x, e_{k}\rangle\langle x, e_{j}\rangle+\sum_{k}h_{ikj}\langle x, x\rangle\langle x, e_{k}\rangle\\
&+\langle x, x\rangle h_{ij}+\sum_{k}h_{ik}h_{kj}(H-\lambda).
\end{align*}
Then by using the Codazzi equation in \eqref{eq2.1}, we find
\begin{align*}
\triangle H=&2n\langle x, N\rangle+2H\langle x, N\rangle^{2}+4\sum_{i,k}h_{ik}\langle x, e_{i}\rangle\langle x, e_{k}\rangle\\
&+\sum_{i}H_{,i}\langle x, x\rangle\langle x, e_{i}\rangle+H\langle x, x\rangle+S(H-\lambda).
\end{align*}

Secondly, by the definition of $\mathcal{\td L}$, we find
\begin{align*}
\mathcal{\td L}H=&\triangle H-\langle x, x\rangle\langle x, \nabla H\rangle\\
=&2n\langle x, N\rangle+2H\langle x, N\rangle^{2}+4\sum_{i,k}h_{ik}\langle\ X, e_{i}\rangle \langle x, e_{k}\rangle+H\langle x, x\rangle+S(H-\lambda),
\end{align*}
implying
\begin{align}
\frac{1}{2}\mathcal{\td L}H^{2}=&\frac{1}{2}(\triangle H^{2}-\langle x, x\rangle\langle x, \nabla H^{2}\rangle)\nnm\\
=&|\nabla H|^{2}+2nH\langle x, N\rangle+2H^{2}\langle x, N\rangle^{2}\nnm\\
&+4H\sum_{i,k}h_{ik}\langle x, e_{i}\rangle\langle x, e_{k}\rangle
+H^{2}\langle x, x\rangle+SH(H-\lambda).\label{ad3.7}
\end{align}
On the other hand
\begin{align*}
\mathcal{\td L}h_{ij}=&\triangle h_{ij}-\langle x, x\rangle \langle x, \nabla h_{ij}\rangle\\
=&H_{,ij}+\sum_{k,m}h_{mi}R_{kmjk}+\sum_{k,m}h_{km}R_{imjk}-\langle x, x\rangle\langle x, \nabla h_{ij}\rangle\\
=&H_{,ij}+Sh_{ij}-H\sum_{m}h_{mi}h_{mj}-\langle x, x\rangle\langle x, \nabla h_{ij}\rangle\\
=&2\delta_{ij}\langle x, N\rangle+2h_{ij}\langle x, N\rangle^{2}+2\sum_{k}h_{jk}\langle x, e_{i}\rangle\langle x, e_{k}\rangle\\
&+2\sum_{k}h_{ik}\langle x, e_{k}\rangle\langle x, e_{j}\rangle
+\sum_{k}h_{ikj}\langle x, x\rangle\langle x, e_{k}\rangle+\langle x, x\rangle h_{ij}\\
&+\sum_{k}h_{ik}h_{kj}(H-\lambda)+Sh_{ij}-H\sum_{m}h_{mi}h_{mj}-\langle x, x\rangle\langle x, \nabla h_{ij}\rangle\\
=&2\delta_{ij}\langle x, N\rangle+2h_{ij}\langle x, N\rangle^{2}+2\sum_{k}h_{jk}\langle x, e_{i}\rangle\langle x, e_{k}\rangle\\
&+2\sum_{k}h_{ik}\langle x, e_{k}\rangle\langle x, e_{j}\rangle
+\langle x, x\rangle h_{ij}+Sh_{ij}-\lambda \sum_{k}h_{ik}h_{kj}.
\end{align*}
It follows that
\begin{align}
\frac{1}{2}\mathcal{\td L}S=&\frac{1}{2}\left(\triangle \sum_{i,j}(h_{ij})^{2}-\langle x, x\rangle\left\langle x, \nabla \left(\sum_{i,j}(h_{ij})^{2}\right)\right\rangle\right)\nnm\\
=&\sum_{k}h_{ijk}^{2}+2H\langle x, N\rangle+2S\langle x, N\rangle^{2}+4\sum_{i,j,k}h_{ij}h_{jk}\langle x, e_{i}\rangle\langle x, e_{k}\rangle\nnm\\
&+\langle x, x\rangle S+S^{2}-\lambda f_{3},\label{ad3.8}
\end{align}
where again $f_{3}=\sum_{i,j,k}h_{ij}h_{jk}h_{ki}$.

Denote by $x^\top=\langle x, e_{i}\rangle e_i$ be the tangential part of the position vector $x$. Then, as in the proof of Theorem \ref{thm 1.2}, we can choose a suitable frame field $\{e_{1},e_{2},\cdots, e_{n}\} $ making diagonal the second fundamental form $h_{ij}$ around each point $p\in M^n$, and perform a direct computation using \eqref{eq1.4-2}, \eqref{ad3.7} and \eqref{ad3.8} to obtain
\begin{align*}
\frac{1}{2}\mathcal{\td L}B=&\frac{1}{2}\mathcal{\td L}S-\frac{1}{n}\left(\frac{1}{2}\mathcal{\td L}H^{2}\right)\\
= &\sum_{i,j,k}h_{ijk}^{2}+B\lagl x,N\ragl^2-\frac{1}{n}|\nabla H|^{2}+x^\top\left(4A^{2}+BI-\frac{4HA}{n}\right)(x^\top)^{t} +B^{2}+\frac{H^{2}B}{n}-\lambda B_{3}-\frac{2\lambda HB}{n}\\
\geq &\sum_{i,j,k}h_{ijk}^{2}-\frac{1}{n}|\nabla H|^{2}+B^{2}+\frac{H^{2}B}{n}-\lambda B_{3}-\frac{2\lambda HB}{n},
\end{align*}
where the assumption \eqref{eq1.4-2} has been used. Once again we use Lemma \ref{lem2.5} to get
\begin{align*}
|B_{3}|\leq \frac{n-2}{\sqrt{n(n-1)}}B^{\frac{3}{2}},
\end{align*}
where the equality holds if and only if at least $n-1 $ of $\mu_{i}$ are equal. It then follows that
\begin{align}
\frac{1}{2}\mathcal{\td L}B\geq &\sum_{i,j,k}h_{ijk}^{2}-\frac{1}{n}|\nabla H|^{2}+B^{2}+\frac{H^{2}B}{n} -|\lambda|\frac{n-2}{\sqrt{n(n-1)}}B^{\frac{3}{2}}-\frac{2}{n}\lambda HB\nnm\\
=&\sum_{i,j,k}h_{ijk}^{2}-\frac{1}{n}|\nabla H|^{2}+B\left(B+\frac{H^{2}}{n}-|\lambda|\frac{n-2}{\sqrt{n(n-1)}}B^{\frac{1}{2}} -\frac{2}{n}\lambda H\right)\nnm\\
=&\sum_{i,j,k}h_{ijk}^{2}-\frac{1}{n}|\nabla H|^{2}+B\left(\left(\sqrt{B}-|\lambda|\frac{n-2}{2\sqrt{n(n-1)}} \right)^{2} +\frac{1}{n}(H-\lambda)^{2}-\frac{n\lambda^{2}}{4(n-1)}\right).\label{eq3.1}
\end{align}
Because of \eqref{eq1.4-1}, we can apply the Corollary \ref{cor2.4} to functions $1$ and $B=S-\frac{H^{2}}{n}$ to find
\begin{align}\label{eq3.2}
0\geq&\int_{M^n}\left(\sum_{i,j,k}h_{ijk}^{2}-\frac{1}{n}|\nabla H|^{2}\right)e^{-\frac{\langle x, x\rangle^{2}}{4}}dV_{M^n} \\
&+\int_{M^n}B\left(\left(\sqrt{B}-|\lambda|\frac{n-2}{2\sqrt{n(n-1)}}\right)^{2}+\frac{1}{n}(H-\lambda)^{2}-\frac{n\lambda^{2}}{4(n-1)}\right)e^{-\frac{\langle x, x\rangle^{2}}{4}}dV_{M^n}.
\end{align}

If $B\not\equiv 0$ and, for all $p\in M^n$, \eqref{1.6} does not hold, that is
$$\left(\sqrt{S(p)-\frac{H^{2}(p)}{n}} -|\lambda|\frac{n-2}{2\sqrt{n(n-1)}}\right)^{2}+\frac{1}{n}(H(p)-\lambda)^{2} -\frac{n\lambda^{2}}{4(n-1)}\geq 0.
$$
everywhere on $M^n$, then the right hand side of \eqref{eq3.2} is nonnegative. It then follows that
\be\label{ad3.5}\sum_{i,j,k}h_{ijk}^{2}-\frac{1}{n}|\nabla H|^{2}\equiv 0,\ee
and at points where $B\neq 0$
\be\label{ad3.6}
\left(\sqrt{S(p)-\frac{H^{2}(p)}{n}} -|\lambda|\frac{n-2}{2\sqrt{n(n-1)}}\right)^{2}+\frac{1}{n}(H(p)-\lambda)^{2} -\frac{n\lambda^{2}}{4(n-1)}=0.
\ee
By \eqref{ad3.5} and \eqref{ad3.6}, the second fundamental form $h$ of $x$ is parallel. In particular, $x$ is isoparametric and thus both $B$ and $H$ are constant.
Since $B\neq 0$, the equality \eqref{ad3.3} shows that $x$ is a complete isoparametric space-like hypersurface in $\bbr^{n+1}_1$ of exactly two distinct principal curvatures one of which is simple. It then follows by \cite{HBL} and $B\neq 0$ that $x$ is isometric to one of the product spaces $\mathbb{H}^{n-1}(c)\times \bbr^1\subset\bbr^{n+1}_1$ and $\mathbb{H}^1(c)\times \bbr^{n-1}\subset\bbr^{n+1}_1$. But it is clear that, for both of these two product spaces, the function $\lagl x,x\ragl$ is not a constant so that both $\mathbb{H}^{n-1}(c)\times \bbr^1\subset\bbr^{n+1}_1$ and $\mathbb{H}^1(c)\times \bbr^{n-1}\subset\bbr^{n+1}_1$ could not be $\lambda$-hypersurfaces with $s=\lagl x,x\ragl$. This contradiction proves that either $B\equiv 0$, namely, $x$ is totally umbilical and isometric to either of the hyperbolic $n$-space $\mathbb{H}^n(c)\subset\bbr^{n+1}_1$ and the Euclidean $n$-space $\bbr^n\subset\bbr^{n+1}_1$, or there exists some $p\in M^n$ such that \eqref{1.6} holds.

The proof of Theorem \ref{thm 1.3} is thus finished.\endproof

\end{document}